\newcommand{\Z}{{\mathbb Z}} \newcommand{\Q}{{\mathbb Q}}
 \newcommand{\F}{{\mathbb F}}
\newcommand{\Hom}{\operatorname{Hom}\nolimits}
\renewcommand{\ker}{\operatorname{Ker}\nolimits}
\newcommand{\rk}{\operatorname{rk}\nolimits}
\renewcommand{\dim}{\operatorname{dim}\nolimits}
\newcommand{\A}{\ifmmode{\mathcal{A}}\else${\mathcal{A}}$\fi}
\newcommand{\B}{\ifmmode{\mathcal{B}}\else${\mathcal{B}}$\fi}
\newcommand{\C}{\ifmmode{\mathcal{C}}\else${\mathcal{C}}$\fi}
\newcommand{\D}{\ifmmode{\mathcal{D}}\else${\mathcal{D}}$\fi}
\newcommand{\G}{\ifmmode{\mathcal{G}}\else${\mathcal{G}}$\fi}
\newcommand{\I}{\ifmmode{\mathcal{I}}\else${\mathcal{I}}$\fi}
\newcommand{\J}{\ifmmode{\mathcal{J}}\else${\mathcal{J}}$\fi}
\newcommand{\K}{\ifmmode{\mathcal{K}}\else${\mathcal{K}}$\fi}
\renewcommand{\O}{\ifmmode{\mathcal{O}}\else${\mathcal{O}}$\fi}
\renewcommand{\P}{\ifmmode{\mathcal{P}}\else${\mathcal{P}}$\fi}
\newcommand{\U}{\ifmmode{\mathcal{U}}\else${\mathcal{U}}$\fi}
\newcommand{\M}{\ifmmode{\mathcal{M}}\else${\mathcal{M}}$\fi}
\newcommand{\N}{\ifmmode{\mathcal{N}}\else${\mathcal{N}}$\fi}
\newcommand{\Ss}{\ifmmode{\mathcal{S}}\else${\mathcal{S}}$\fi}
\newcommand{\T}{\ifmmode{\mathcal{T}}\else${\mathcal{T}}$\fi}
\newcommand{\Ff}{\ifmmode{\mathcal{F}}\else${\mathcal{F}}$\fi}
\newcommand{\Ll}{\ifmmode{\mathcal{L}}\else${\mathcal{L}}$\fi}
\newtheorem{Thm}{Theorem}[section]
\newtheorem{Conj}{Conjecture}[section]
\newtheorem{Prop}[Thm]{Proposition}
\newtheorem{Lem}[Thm]{Lemma}
\newtheorem*{Maintheorem}{Main Theorem}
\theoremstyle{definition}
\newtheorem{Defi}[Thm]{Definition}
\theoremstyle{remark}
\newtheorem*{Not}{Notation}
\theoremstyle{plain}
\title{Cohomology of $p$-groups of nilpotency class smaller than $p$}
\author{Antonio D\'iaz Ramos}
\address{Departamento de \'Algebra, Geometr\'ia y Topolog\'ia,
Universidad de M\'alaga, Apdo correos 59, 29080 M\'alaga, Spain}
\email{adiazramos@uma.es}
\author{Oihana Garaialde Oca\~{n}a}
\address{Matematika Saila,
Euskal Herriko Unibertsitatearen Zientzia eta Teknologia Fakultatea,
 posta-kutxa 644, 48080 Bilbo, Spain
}
\email{oihana.garayalde@ehu.es}
\author{Jon Gonz\'alez-S\'anchez}
\address{Departamento de Matemáticas, 
Facultad de Ciencia y Tecnología de la Universidad del País Vasco,
 Apdo correos 644, 48080 Bilbao, Spain}
\email{jon.gonzalez@ehu.es}
\date{\today}
\begin{document}

\maketitle

\begin{abstract}
Let $p$ be a prime number, let $d$ be an integer and let $G$ be a $d$-generated finite $p$-group of nilpotency class smaller than $p$. Then the number of possible isomorphism types for the mod $p$ cohomology algebra $H^*(G;\F_p)$ is bounded in terms of $p$ and $d$. 
\end{abstract}

\section{Introduction}
Let $p$ be a prime number and let $G$ be a finite $p$-group of order $p^n$ and nilpotency class $c$, then $G$ has coclass $m=n-c$. In 2005, J. F. Carlson proved that for a fixed integer $m$, there are only finitely many possible isomorphism types for the mod $2$ cohomology algebra of $2$-groups of coclass $m$ (see \cite[Theorem 5.1]{Carlson}). In the same paper, J. F. Carlson conjectures that the analogous result should hold for the $p$ odd case, that is, he conjectures that there are finitely many isomorphism types of cohomology algebras for all $p$-groups of fixed coclass. This result has been partially proved first by B. Eick and D. Green in \cite{BettinaDavid2015} and later by the authors of this manuscript in \cite{DGG}. In the former paper, Eick and Green prove Carlson's conjecture for the reduced mod $p$ algebras, that is, for the quotients $H^*(G;k)/\text{nil}(H^*(G;k))$ where $\text{nil}(H^*(G;k))$ denotes the ideal generated by the nilpotent elements of the algebra $H^*(G;k)$. To that aim, they show that there are finitely Quillen categories of $p$-groups of fixed coclass $m$. In the latter paper, D\'iaz Ramos, Garaialde Ocaña and Gonz\'alez-S\'anchez prove Carlson's conjecture for the \emph{non-twisted} $p$-groups of coclass $m$ \cite[Theorem 7.1]{DGG}. For instance, all the $2$-groups of fixed coclass are non-twisted.

The general case of the above conjecture is still open. One of the main differences between the $p=2$ case and the $p$ odd case is that in the even case, $2$-groups of coclass $m$ have a `large' abelian subsection, while in the $p$ odd case, such $p$-groups have a `large' subsection of nilpotency class $2$.

A dual problem to Carlson's conjecture would be to study the number of  possible isomorphism types for the mod $p$ cohomology algebras $H^*(G;\F_p)$ of $d$-generated finite $p$-groups $G$ of nilpotency class $c$. In this note we prove that whenever the nilpotency class $c$ of a $d$-generated finite $p$-group $G$ is smaller than $p$, then the number of possible isomorphism types of cohomology algebras $H^*(G;\F_p)$ is bounded in terms of $p$ and $d$.

\begin{Maintheorem} 
The number of possible isomorphism types for the mod $p$-cohomology algebra of a $d$-generated finite $p$-group of nilpotency class smaller than $p$ is bounded by a function depending on $p$ and $d$.
\end{Maintheorem}

The key ingredients for proving the above result are the Lazard correspondence, a structure result for finite nilpotent $\Z_p$-Lie algebras, the description of the mod $p$-cohomology of powerful $p$-central $p$-groups with the $\Omega$-extension property given by T. S. Weigel (see \cite{Weigel}) and some refinements on Carlson counting arguments from \cite{Carlson}.

\begin{Not}\label{reducedandnilpotentcohomology}
Let $G$ be a group, $G^{p^k}$ denotes the subgroup generated by the $p^k$ powers of $G$ and $\Omega_k(G)$ denotes the subgroup generated by the elements of order $p^k$. The
 {\it reduced} mod $p$ cohomology ring $H^*(G;\F_p)_{\text{red}}$ is the quotient $H^*(G;\F_p)/nil(H^*(G;\F_p))$, where $nil(H^*(G;\F_p))$ is the ideal of all nilpotent elements in the mod $p$ cohomology. We will use $[,]$ to denote both the Lie bracket of a Lie algebra and the group commutator.  If $D$ is either a group or a Lie algebra and $I$ is either a normal subgroup or an ideal of $D$ repectively, then we denote
$$
[I,{}_{c}D]=[I,\overbrace{D, \cdots, D}^{c}].
$$
We say that a function $k=k(a,b,\ldots,A,B,\ldots )$ is $(a,b,\ldots)$-bounded if there exist a function $f$ depending on $(a,b,\ldots )$ such that $k\leq f(a,b\ldots )$. The rank of a $p$-group $G$ is the sectional rank, that is, 
\[
\rk(G)=\text{max} \{d(H)\; |\; \text{for all} \; H\leq G\},
\]
where $d(H)$ is the minimal number of generators of $H$.
\end{Not}

\section{Preliminaries}

\subsection{Powerful $p$-central groups and $\Z_p$-Lie algebras.}
Following \cite{Weigel}, we define powerful $p$-central groups and $\Z_p$-Lie algebras with the $\Omega$-extension property ($\Omega$EP for short). We also recall a result about the cohomology algebra of such $p$-groups. Recall that for a $p$-group $G$ we denote
$$
\Omega_1(G)=\langle g\in G\mid g^p=1\rangle,
$$
and for a $\Z_p$-Lie algebra $L$ we denote 
$$
\Omega_1(L)=\{a\in L\mid p.a=0\}.
$$


\begin{Defi}\label{defi: powerfulpcentralomega} Let $p$ be an odd prime. Let $G$ be a $p$-group and let $L$ be a finite $\Z_p$-Lie algebra. Then, 
\begin{enumerate}
\item $G$ is \emph{powerful} if $[G,G] \subset G^p$ and $L$ is \emph{powerful} if $[L,L] \subset pL$.
\item $G$ is \emph{$p$-central} if its elements of order $p$ are contained in the center of $G$ and $L$ is \emph{$p$-central} if its elements of order $p$ are contained in the center of $L$.
\item $G$ has the \emph{$\Omega$-extension property} ($\Omega$EP for short) if there exists a $p$-central group $H$ such that $G= H/ \Omega_1(H)$ and $L$ has the \emph{$\Omega$-extension property} if there exists a $p$-central Lie algebra $A$ such that $L= A/ \Omega_1(A)$.

\end{enumerate}
\end{Defi}

An easy computation shows that if $G$ (or $L$) has the $\Omega$EP, then $G$ (or $L$) is $p$-central. We finish this subsection by describing the 
mod $p$ cohomology algebra of a powerful $p$-central $p$-group with $\Omega$EP.


\begin{Thm}\label{Thm:Weigel}Let $p$ be an odd prime, let $G$ be a powerful $p$-central $p$-group with the $\Omega$EP and let $d$ denote the $\F_p$-rank of $\Omega_1(G)$. Then,
\begin{itemize}
\item[(a)] $H^*(G;\F_p)\cong \Lambda(y_1, \dots, y_d) \otimes \F_p[x_1, \dots, x_d]$ with $|y_i|=1$ and $|x_i|=2$,
\item[(b)] the reduced restriction map $j_{\text{red}}: H^*(G;\F_p)_{\text{red}} \to H^*(\Omega_1(G);\F_p)_{\text{red}}$ is an isomorphism.
\end{itemize}
\end{Thm}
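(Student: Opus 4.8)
The plan is to induct on the exponent of $G$, comparing $G$ with $\bar G:=G/\Omega_1(G)$. If $\exp(G)=p$, then powerfulness gives $[G,G]\subseteq G^p=1$, so $G\cong(\Z/p)^d$ is elementary abelian and (a), (b) reduce to the classical computation of $H^*((\Z/p)^d;\F_p)$. For the inductive step I first check that $\bar G$ again satisfies the hypotheses: it is powerful since $[\bar G,\bar G]=\overline{[G,G]}\subseteq\overline{G^p}=\bar G^p$; it has the $\Omega$EP with witness $G$ itself, because $G$ is $p$-central and $\bar G=G/\Omega_1(G)$; and $p$-centrality of $\bar G$ then follows from the $\Omega$EP. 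Since $G$ is powerful the $p$-power map induces $\bar G\cong G^p$ with $\exp(\bar G)=\exp(G)/p$ and $d(\bar G)=d(G)=d$, where $d=\dim_{\F_p}\Omega_1(G)=d(G)$ by the standard identity $|\Omega_1(G)|=|G:G^p|$ for powerful groups. Thus the induction hypothesis yields $H^*(\bar G;\F_p)\cong\Lambda(\bar y_1,\dots,\bar y_d)\otimes\F_p[\bar x_1,\dots,\bar x_d]$ with $|\bar y_i|=1$, $|\bar x_j|=2$.

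As $G$ is $p$-central, $\Omega_1(G)\cong(\Z/p)^d$ is central, so I study the central extension
\[
1\longrightarrow\Omega_1(G)\longrightarrow G\longrightarrow\bar G\longrightarrow 1
\]
through its Lyndon--Hochschild--Serre spectral sequence, which has trivial coefficients and
\[
E_2\cong H^*(\bar G;\F_p)\otimes\Lambda(a_1,\dots,a_d)\otimes\F_p[c_1,\dots,c_d],
\]
where $H^*(\Omega_1(G);\F_p)=\Lambda(a_k)\otimes\F_p[c_k]$ with $c_k=\beta(a_k)$. The heart of the proof is to control the transgression $\tau=d_2\colon H^1(\Omega_1(G);\F_p)\to H^2(\bar G;\F_p)$. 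Writing $QH^2(\bar G)=H^2(\bar G;\F_p)/\bigl(H^1(\bar G;\F_p)\cup H^1(\bar G;\F_p)\bigr)=\langle\bar x_1,\dots,\bar x_d\rangle$ for the degree-two indecomposables, the claim I must establish is that the composite
\[
H^1(\Omega_1(G);\F_p)\xrightarrow{\ \tau\ }H^2(\bar G;\F_p)\longrightarrow QH^2(\bar G)
\]
is an isomorphism; equivalently, $\tau$ is injective and transverse to the cup-product subspace $\Lambda^2\langle\bar y_1,\dots,\bar y_d\rangle$. I expect this to be the main obstacle. It is exactly here that all three hypotheses are used: $p$-centrality makes $\Omega_1(G)$ a central, elementary abelian bottom layer; powerfulness forces $\Omega_1(G)\subseteq\Phi(G)=G^p$ so that each of its directions is the image of a genuine $p$-power relation rather than a commutator relation; and the $\Omega$EP guarantees that these $d$ relations remain independent, so that $\tau$ detects them bijectively onto the $d$ polynomial generators of $\bar G$. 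I note that a nonzero cup-product component of $\tau$ is harmless: since the $\bar y_i\bar y_k$ are nilpotent, replacing $\bar x_j$ by $\bar x_j':=\tau(a_j)$ is a unipotent change of polynomial generators, so that still $H^*(\bar G;\F_p)=\F_p[\bar x_1',\dots,\bar x_d']\otimes\Lambda(\bar y_1,\dots,\bar y_d)$.

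Granting the transgression claim, the spectral sequence collapses quickly. With the generators $\bar x_j'=d_2(a_j)$ the differential $d_2$ is the Koszul differential on $\F_p[\bar x_j']\otimes\Lambda(a_j)$, so passing to homology leaves $E_3\cong\Lambda(\bar y_1,\dots,\bar y_d)\otimes\F_p[c_1,\dots,c_d]$. The classes $c_j=\beta(a_j)$ are infinite cycles: by the Kudo transgression theorem their transgression is $\beta(\bar x_j')$, and since $\beta$ annihilates the degree-two indecomposables while the remaining correction lies in the ideal generated by the $\bar x_k'$, this transgression vanishes in $E_3$; hence the sequence degenerates at $E_3$. Because $p$ is odd the degree-one classes square to zero, so the resulting multiplicative extension is trivial and, lifting $\bar y_i$ to $y_i\in H^1(G;\F_p)$ (the inflation $H^1(\bar G;\F_p)\to H^1(G;\F_p)$ is an isomorphism, since the injectivity of $\tau$ kills $E_\infty^{0,1}$) and $c_j$ to $x_j\in H^2(G;\F_p)$, one obtains statement (a): $H^*(G;\F_p)\cong\Lambda(y_1,\dots,y_d)\otimes\F_p[x_1,\dots,x_d]$. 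Finally, statement (b) falls out of the same computation: the $x_j$ are fibre classes represented in $E_\infty^{0,2}$, so the edge homomorphism identifies their restrictions $j^*(x_j)$ with the polynomial generators $c_j$ of $H^*(\Omega_1(G);\F_p)$, while the exterior generators $y_i$ are nilpotent and die in the reduced algebra; thus $j_{\text{red}}$ becomes the isomorphism $\F_p[x_1,\dots,x_d]\to\F_p[c_1,\dots,c_d]$.
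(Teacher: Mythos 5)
First, note that the paper does not actually prove this statement: its ``proof'' is a citation of Weigel (Theorem~2.1 and Corollary~4.2 of \emph{$p$-central groups and Poincar\'e duality}), so your argument is necessarily an independent route. Unfortunately it contains a step that is false, not merely unproven. Take $G=\Z/p\times\Z/p^2$: this group is powerful, $p$-central and has the $\Omega$EP (it is $H/\Omega_1(H)$ for the $p$-central group $H=\Z/p^2\times\Z/p^3$), and $d=\dim_{\F_p}\Omega_1(G)=2$. But $\bar G=G/\Omega_1(G)\cong\Z/p$, so $d(\bar G)=1\neq 2=d(G)$, contradicting your assertion that $d(\bar G)=d(G)=d$; the point is that $d(\bar G)=d(G^p)$ can be strictly smaller than $d(G)$. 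Consequently your central claim cannot hold: $H^1(\Omega_1(G);\F_p)$ is $2$-dimensional while $QH^2(\bar G)$ is $1$-dimensional, so the composite $H^1(\Omega_1(G);\F_p)\xrightarrow{\ \tau\ }H^2(\bar G;\F_p)\to QH^2(\bar G)$ is not an isomorphism, and in this example $\tau$ is not even injective (the class dual to the $\Z/p$ direct factor lies in $\ker\tau$ and survives to $E_\infty^{0,1}$, which also refutes your parenthetical claim that inflation is an isomorphism on $H^1$). The conclusion of the theorem is still true for this $G$, but it is reached by a different differential pattern: part of $H^1(\Omega_1(G);\F_p)$ consists of permanent cocycles, and only a complement transgresses onto the polynomial indecomposables of $\bar G$.

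Even setting this aside, the step you yourself label ``the heart of the proof'' is not proved but only motivated: the sentence beginning ``It is exactly here that all three hypotheses are used'' is a heuristic, and it is precisely the content of Weigel's theorem that the $\Omega$EP forces the relevant transgressions to be independent. A repaired argument would have to show that $\tau$ carries a complement of $\ker\tau$ isomorphically onto $QH^2(\bar G)$ while $\ker\tau$ and the Bocksteins of its elements consist of permanent cocycles, and each of these assertions requires genuine input from the $\Omega$EP witness $H$ with $G=H/\Omega_1(H)$. In addition, several facts are used without justification and are themselves nontrivial theorems about powerful groups: $|G:G^p|=|\Omega_1(G)|$, the isomorphism $\bar G\cong G^p$ induced by the $p$-power map, and the vanishing of $\beta(\bar x_j')$ in $E_3^{3,0}$ needed for the Kudo step. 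As it stands the proposal is not a proof; if the goal is to avoid citing Weigel, the transgression analysis must be carried out in full and the induction restated so that the inductive hypothesis is applied with the correct rank $\dim_{\F_p}\Omega_1(\bar G)$ rather than $d$.
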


\begin{proof}
See \cite[Theorem 2.1 and Corollary 4.2]{Weigel}
\end{proof}

\subsection{The Lazard correspondence for finitely generated nilpotent pro-$p$ groups and $\Z_p$-Lie algebras}


Let $Lie_p$ and $Gr_p$ denote the category of finitelly generated $\Z_p$-Lie algebras of nilpotency class smaller than $p$ and the category of finitelly generated pro-$p$ groups of nilpotency class smaller than $p$, respectively. Denote by $Top$ the category of topological spaces and by $\textbf{for}: Lie_p\to Top$ and  $\textbf{For}: Gr_p\to Top$ the fully faithful forgetful functors which forget the algebraic structure.


\begin{Thm}[\cite{La} Lazard Correspondence]\label{thm: Lazard}
There exist isomorphisms of categories one inverse of each other 
\begin{eqnarray*}
\textbf{exp} : Lie_p\longrightarrow Gr_p \; \; \text{and} \; \; 
\textbf{log} :Gr_p\longrightarrow Lie_p,
\end{eqnarray*}
such that
\begin{equation}
\textbf{for}\circ \textbf{log} =\textbf{For} \text{ \ \ and \ \ } \textbf{For}\circ \textbf{exp}=\textbf{for}.
\end{equation}
Furthermore, if $G\in Gr_p$ and $K$ is a subgroup of $G$,  the following statements hold:
\begin{itemize}
\item[(a)] $K$ is a normal subgroup of $G$ if and only if $\textbf{log}(K)$ is an ideal in $\textbf{log}(G)$. Moreover,  $\textbf{log}(G/K)=\textbf{log}(G)/\textbf{log}(K)$,
\item[(b)] $\textbf{log}(\Omega_1(G))=\Omega_1(\textbf{log}(G))$ and $\textbf{log}(G^p)=p\textbf{log}(G)$,
\item [(c)] Nilpotency class of $G$ = nilpotency class of $\textbf{log}(G)$,
\item [(d)] $G$ is a powerful $p$-group if and only if $\textbf{log}(G)$ is a powerful $\Z_p$-Lie algebra,
\item [(e)] $G$ is a $p$-central group if and only if $\textbf{log}(G)$ is a $p$-central $\Z_p$-Lie algebra,
\item [(f)] A subset $X$ of $K$ generates $K$ (topologically) if and only if $X$ generates $\textbf{log} (K)$ as a $\Z_p$-Lie algebra. In particular the number of generators of $K$ as a topological group coincides with the number of generators of $\textbf{log} (K)$ as a $\Z_p$-Lie algebra.
\end{itemize}
\end{Thm}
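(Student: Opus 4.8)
The plan is to construct the two functors explicitly from the Baker--Campbell--Hausdorff (BCH) formula and its inverse, using the hypothesis that the nilpotency class is smaller than $p$ to secure $p$-integrality. First I would recall that the BCH series $\Phi(x,y)=x+y+\frac{1}{2}[x,y]+\cdots$ is a formal Lie series whose homogeneous component of degree $n$ has rational coefficients with denominators divisible only by primes $\le n$. Hence, if $L\in Lie_p$ has class $c<p$, all iterated brackets of length exceeding $c$ vanish and every coefficient surviving in the resulting finite expression lies in $\Z_{(p)}\subset\Z_p$. I would then \emph{define} $\textbf{exp}(L)$ to be the underlying set and topology of $L$ equipped with the product $x\cdot y=\Phi(x,y)$; associativity is inherited from the formal associativity of BCH, the identity is $0$, and the inverse of $x$ is $-x$.

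Conversely, to define $\textbf{log}$ I would invoke the inverse formulas expressing the Lie operations through the group product, writing $x+y$ and $[x,y]$ as $\Z_p$-combinations of iterated group commutators; these are $p$-integral for the same reason once the class is $<p$. Since BCH and its inverse are mutually inverse as formal laws, the two constructions are inverse bijections on underlying sets, and a set map respects the group structure if and only if it respects the Lie structure. This simultaneously shows that $\textbf{exp}$ and $\textbf{log}$ are mutually inverse isomorphisms of categories and that they commute with the forgetful functors, since by construction neither alters the underlying topological space.

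For the stated properties I would argue as follows. Because the group and Lie operations are polynomially inter-definable over $\Z_p$, a subset is closed under the group operations exactly when it is closed under the Lie operations; applied to subgroups this gives the sub-structure correspondence, and distinguishing closure under conjugation from closure under bracket yields the normal-subgroup/ideal equivalence of (a), the quotient identity following from functoriality. For (b), iterating the product gives $x^{\cdot n}=n\cdot x$, since in $\Phi(x,kx)$ every nonlinear term involves the self-bracket $[x,x]=0$; thus $g\mapsto g^p$ corresponds to $x\mapsto p\cdot x$, giving both $\textbf{log}(G^p)=p\,\textbf{log}(G)$ and $\textbf{log}(\Omega_1(G))=\Omega_1(\textbf{log}(G))$. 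The leading term of the group commutator $[g,h]$ under $\textbf{log}$ is the bracket $[x,y]$, so the lower central series of $G$ is carried onto that of $\textbf{log}(G)$, giving (c). Properties (d) and (e) then follow by combining (b), (c), and the fact that the group center corresponds to the Lie center: the powerful condition $[G,G]\subset G^p$ translates to $[L,L]\subset pL$, and the $p$-central condition on $\Omega_1$ translates directly. Finally, (f) is again a consequence of inter-definability, a topological generating set for $K$ being precisely a Lie-algebra generating set for $\textbf{log}(K)$.

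The main obstacle is the $p$-integrality underlying both constructions. One must check that, when the class is $<p$, none of the coefficients of $\Phi$ (nor those of the inverse formulas) appearing in degrees $\le c$ has denominator divisible by $p$, and that truncating the a priori infinite BCH series to the relevant nilpotent quotient produces a well-defined and still associative operation. The denominator bound---in essence that $p$ first divides a BCH coefficient in degree $p$---is the technical heart of the matter; once it is in place, the categorical isomorphism and properties (a)--(f) reduce to formal manipulations with the two interchangeable polynomial laws.
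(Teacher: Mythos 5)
Your outline is the standard Baker--Campbell--Hausdorff construction of the Lazard correspondence, and it is essentially the same approach as the paper's: the paper's proof consists of citations to Lazard and to \cite{khukhro} and \cite{CicaloGraafLee} for exactly this construction (the BCH formula and its inverse formulas, $p$-integrality of the coefficients in degrees $\le c<p$, and the inter-definability of the group and Lie operations yielding items (a)--(f)). The technical point you single out --- that no BCH denominator in degree below $p$ is divisible by $p$ --- is precisely the content the paper delegates to \cite[Lemma 9.15 and Theorem 10.13]{khukhro}.
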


\begin{proof}
The existence of the functors $\textbf{exp}$ and $\textbf{log}$ was first discovered by M. Lazard in \cite{La}. Explicit formulae can be found in \cite{CicaloGraafLee}.  One can also found 
the Lazard correspondence in \cite[Theorem 10.13 and page 124]{khukhro}. More precisely: (a) follows from \cite[Theorem 10.13 (b)]{khukhro}, (b) follows from the explicit Baker-Campbell-Hausdorff formula \cite[Lemma 9.15]{khukhro}, (c) follows from  \cite[Theorem 10.13 (d)]{khukhro}, (d) and (e) follow from comparing (b) of this theorem and  \cite[Theorem 10.13 (c)]{khukhro}. Finally, (f) follows from the Baker-Campbell-Hausdorff formulae.
\end{proof}

We finish this subsection by proving a result on finite $p$-groups of nilpotency class smaller than $p$.

\begin{Prop}
\label{prop:GpOmega}
Let $p$ be a prime number and let $G$ be a finite $p$-group of nilpotency class smaller than $p$. Then $G^p$ is powerful $p$-central $p$-group with the $\Omega$EP.
\end{Prop}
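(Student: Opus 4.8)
The plan is to work entirely on the Lie algebra side via the Lazard correspondence (Theorem \ref{thm: Lazard}), since the hypothesis that $G$ has nilpotency class $c<p$ places $G$ in $Gr_p$ (a finite $p$-group is in particular a finitely generated pro-$p$ group), so $L=\textbf{log}(G)$ is a finite $\Z_p$-Lie algebra of nilpotency class $c<p$. By parts (b) and (d)--(e) of Theorem \ref{thm: Lazard}, the three conditions ``powerful'', ``$p$-central'' and ``$\Omega$EP'' transfer between $G^p$ and $\textbf{log}(G^p)=pL$ (using $\textbf{log}(G^p)=pL$ from part (b) and the fact that subgroups/ideals correspond under part (a)). Thus it suffices to prove the purely Lie-theoretic statement: \emph{if $L$ is a finite $\Z_p$-Lie algebra of nilpotency class $c<p$, then $pL$ is powerful, $p$-central, and has the $\Omega$EP.}

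First I would check powerfulness of $pL$. Using the bilinearity of the bracket, $[pL,pL]=p^2[L,L]\subseteq p^2 L = p(pL)$, so $[pL,pL]\subseteq p(pL)$ and $pL$ is powerful; this is immediate and uses nothing about the class. Next I would address $p$-centrality, which is where the hypothesis $c<p$ is genuinely needed. I want to show every element $x\in pL$ with $px=0$ lies in the center of $pL$, i.e.\ $[x,pL]=0$. Writing $x=py$ for some $y\in L$, the condition $px=0$ means $p^2y=0$. The natural tool is a commutator-collection / Jennings-type argument bounding the exponent needed to kill brackets: in a Lie algebra of class $c$, iterated brackets of length exceeding $c$ vanish, and one exploits this together with the powers of $p$ appearing in $x$ and in the elements of $pL$ to force $[x,pL]=0$. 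Concretely, I expect to show that for any $z\in L$ the element $[x,pz]=p[py,z]=[p^2 y,z]$-type expressions are controlled so that the annihilation $p^2y=0$ propagates through the (class $<p$) lower central structure.

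To establish the $\Omega$EP I would produce an explicit $p$-central Lie algebra $A$ with $pL\cong A/\Omega_1(A)$. The natural candidate is $A=L$ itself (or a suitable $p$-central overalgebra): one checks that $L$ is $p$-central when $c<p$, and that $L/\Omega_1(L)\cong pL$ via the map induced by multiplication by $p$, whose kernel is exactly $\Omega_1(L)=\{a\in L\mid pa=0\}$ and whose image is $pL$. Here again the class bound $c<p$ is the essential hypothesis guaranteeing $p$-centrality of $L$; the isomorphism $L/\Omega_1(L)\xrightarrow{\sim}pL$ is then a routine consequence of the first isomorphism theorem applied to $a\mapsto pa$.

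The main obstacle I anticipate is the $p$-centrality verification: establishing that $p$-torsion elements of $pL$ are central is not a formal bilinearity manipulation but requires genuinely using $c<p$ to control how divisibility by $p$ interacts with the lower central series. The cleanest route is likely to first prove $p$-centrality of $L$ as a standalone lemma for class $<p$ Lie algebras (via a careful exponent/collection estimate on brackets of $p$-torsion elements), and then derive both the $p$-centrality of $pL$ and the $\Omega$EP as consequences; translating everything back to $G^p$ through Theorem \ref{thm: Lazard}(a),(b),(d),(e) is then purely formal.
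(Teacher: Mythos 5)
Your overall strategy coincides with the paper's: pass to $L=\textbf{log}(G)$ via the Lazard correspondence and prove the three properties for $pL$. Two remarks before the real issue. First, you overestimate the difficulty of $p$-centrality: for $x\in\Omega_1(pL)$ and $pz\in pL$ one has $[x,pz]=p[x,z]=[px,z]=0$ by bilinearity alone, which is exactly the computation you write down; no collection argument and no use of $c<p$ is needed there (the class hypothesis enters only to make the Lazard correspondence available at all). This is precisely what the paper does.

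The genuine gap is in your treatment of the $\Omega$EP. Your proposed witness $A=L$ with the map ``multiplication by $p$'' fails for two independent reasons. (i) $L$ need \emph{not} be $p$-central when $c<p$: take $L$ free nilpotent of class $2$ on two generators $x,y$ over $\Z/p^2\Z$; then $px\in\Omega_1(L)$ but $[px,y]=p[x,y]\neq 0$. (ii) Even when the kernel works out, $a\mapsto pa$ is not a morphism of Lie algebras, since $[pa,pb]=p^2[a,b]$ whereas a homomorphism would require $p[a,b]$; so $L/\Omega_1(L)\cong pL$ only as abelian groups, not as $\Z_p$-Lie algebras, and the $\Omega$EP demands a Lie algebra isomorphism $A/\Omega_1(A)\cong pL$. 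This scaling mismatch is exactly why the paper does not use $L$ itself: it writes $(L,+)=M/I$ with $M$ free over $\Z_p$ and $I\subseteq pM$, lifts the bracket to a bilinear form $\{,\}$ on $M$ whose Jacobiator lands in $I$, and observes that on triples from $pM$ the Jacobiator lands in $p^3I\subseteq pI$, so that $\{,\}$ descends to a genuine Lie bracket on $pM/pI$. The algebra $pM/pI$ is then powerful and $p$-central, $\Omega_1(pM/pI)=I/pI$, and $(pM/pI)/(I/pI)\cong pM/I=pL$ as Lie algebras. Without some such rescaled lift (or another construction achieving the same), your argument does not establish the $\Omega$EP, which is the only nontrivial part of the proposition.
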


\begin{proof}
Let $G$ be a finite $p$-group of nilpotency class $m<p$. By the Lazard correspondence, $G=\textbf{exp}(L)$ where $(L,+, [,])$ is a $\Z_p$-Lie algebra of nilpotency class $m$. 

The $\Z_p$-Lie algebra $(pL, +,[,])$ satisfies the following properties
\[
[pL,pL]=p[pL,L]\leq p(pL) \; \text{and} \; [\Omega_1(pL), pL]=p[\Omega_1(pL),L]=0.
\]
That is, $pL$ is a powerful $p$-central $\Z_p$-Lie algebra of nilpotency class at most $m$ (see Definition \ref{defi: powerfulpcentralomega}). By Theorem \ref{thm: Lazard}(b), $G^p=\textbf{exp}(pL)$ is a powerful $p$-central $p$-group of nilpotency class at most $m$.

It remains to show that $G^p$ has the $\Omega$EP. To that aim, consider $(L,+)$ as a $d$-generated abelian $p$-group for some positive integer $d$. Then, $L$ is isomorphic to the quotient of a free $\Z_p$-module $M$ with $d$ generators by a $\Z_p$-submodule $I$ with $I\subseteq pM$. The Lie bracket $[,]\in \Hom(L\otimes_{\Z_p}L,L)$ is an antisymmetric bilinear form and it can be uniquely lifted to the bilinear form $\{,\}\in \Hom(M\otimes_{\Z_p}M,M)$. Although this bilinear form $\{,\}$ may fail to be a Lie bracket in $M$, for all $x,y,z\in M$, we have that
\[
\{x,y,z\}+\{z,x,y\}+\{y,z,x\}\in I
\]
holds, since $[,]$ satisfies the Jacobi identity in $L=M/I$. In particular, the following property
\[
\{px,py,pz\}+\{pz,px,py\}+\{py,pz,px\}\in p^3I,
\]
shows that the bilinear form $\{,\}$ is a Lie bracket in the quotient $pM/pI$. Moreover, the nilpotency class of $(pM/pI, +, \{,\})$ is at most $m$ as
\[
\{\overbrace{pM,\ldots,pM}^{m+1}\}= p^{m+1}\{\overbrace{M,\ldots,M}^{m+1}\}\leq p^{m+1}I.
\]
As before, the properties
\[
\{pM,pM\}\leq pM\; \text{and} \; \{\Omega_1(pM),pM\}=0
\]
show that $(pM/pI, + \{,\})$ is a powerful $p$-central $\Z_p$-Lie algebra. Notice that $\Omega_1(pM/pI)=I/pI$ and by the third isomorphism theorem
$$pL=p(M/I)=pM/I\cong \frac{pM/pI}{I/pI}=\frac{pM/pI}{\Omega_1(pM/pI)}.$$
Finally, by Theorem \ref{thm: Lazard}, we have that $\textbf{exp}(pM/pI)$ is $p$-central and, as
\[
G^p=\textbf{exp}(pL)=\textbf{exp}\left(\frac{pM/pI}{\Omega_1(pM/pI)}\right)=\frac{\textbf{exp}(pM/pI)}{\textbf{exp}(\Omega_1(pM/pI)}=\frac{\textbf{exp}(pM/pI)}{\Omega_1(\textbf{exp}(pM/pI))},
\]
we conclude that $G^p$ is a powerful $p$-central $p$-group with the $\Omega$EP.
\end{proof}

\section{Finitely generated nilpotent $\Z_p$-Lie algebras}\label{sec: liealgebras}

We start this section by giving an easy result that will be used throughout the section.

\begin{Lem}\label{inclusioncentralseries} Let $L$ be a $\Z_p$-Lie algebra and let $I$ be an ideal in $L$. Then, for all $i\geq 1$,
\[
[\gamma_i(L), I]\subseteq [I, {}_{i}L].
\]
where $\gamma_i(L)$ denotes the lower central series of $L$. In particular, for all $i,j\geq 1$, we have that $[\gamma_i(L), \gamma_j(L)]\subseteq \gamma_{i+j}(L)$.
\end{Lem}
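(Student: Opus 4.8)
The plan is to prove the inclusion $[\gamma_i(L),I]\subseteq[I,{}_iL]$ by induction on $i$, establishing it simultaneously for \emph{every} ideal $I$ of $L$ so that the induction hypothesis can be reapplied to an auxiliary ideal arising in the argument. The two workhorses are antisymmetry of the bracket and the Jacobi identity, which I would first record in the ``three-subalgebra'' form: for submodules $A,B,C$ of $L$,
\[
[[A,B],C]\subseteq[A,[B,C]]+[[A,C],B],
\]
obtained by expanding each bracket of generators via $[[a,b],c]=[a,[b,c]]+[[a,c],b]$. I would also note the elementary fact that if $I$ is an ideal then $J:=[L,I]\subseteq I$ is again an ideal of $L$ (a one-line Jacobi computation), since $J$ will play the role of an auxiliary ideal below.

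For the base case $i=1$ the claim is just $[\gamma_1(L),I]=[L,I]=[I,L]=[I,{}_1L]$ by antisymmetry. For the inductive step, assuming the statement for $i$ and all ideals, I would write $\gamma_{i+1}(L)=[\gamma_i(L),L]$ and apply the Jacobi inclusion with $A=\gamma_i(L)$, $B=L$, $C=I$ to obtain
\[
[\gamma_{i+1}(L),I]\subseteq[\gamma_i(L),[L,I]]+[[\gamma_i(L),I],L].
\]
The second summand is immediately controlled: by the induction hypothesis applied to $I$ one has $[\gamma_i(L),I]\subseteq[I,{}_iL]$, whence $[[\gamma_i(L),I],L]\subseteq[[I,{}_iL],L]=[I,{}_{i+1}L]$.

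The delicate term is the first one, $[\gamma_i(L),[L,I]]$, and this is where I expect the main obstacle. The naive move---bounding $[L,I]\subseteq I$ and invoking the hypothesis---only lands in $[I,{}_iL]$, which is \emph{larger} than the target $[I,{}_{i+1}L]$, so it is useless. Instead I would apply the induction hypothesis to the auxiliary ideal $J=[L,I]$, giving $[\gamma_i(L),J]\subseteq[J,{}_iL]=[[L,I],{}_iL]$, and then use antisymmetry $[L,I]=[I,L]$ to rewrite this \emph{exactly} as $[[I,L],{}_iL]=[I,{}_{i+1}L]$. Both summands then lie in $[I,{}_{i+1}L]$, completing the induction; quantifying over all ideals from the outset is precisely what legitimizes the reapplication of the hypothesis to $J$.

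Finally, the ``in particular'' statement follows by taking $I=\gamma_j(L)$, which is an ideal of $L$: the lemma yields $[\gamma_i(L),\gamma_j(L)]\subseteq[\gamma_j(L),{}_iL]$, and unwinding the left-normed bracket via $\gamma_{k+1}(L)=[\gamma_k(L),L]$ gives $[\gamma_j(L),{}_iL]=\gamma_{i+j}(L)$ directly.
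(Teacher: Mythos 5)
Your proposal is correct and follows essentially the same route as the paper: induction on $i$, writing $\gamma_{i+1}(L)=[\gamma_i(L),L]$, splitting $[[\gamma_i(L),L],I]$ via the Jacobi identity into the two summands $[[\gamma_i(L),I],L]$ and $[\gamma_i(L),[L,I]]$, and handling the second by applying the inductive hypothesis to the ideal $[L,I]$. The paper leaves the quantification over all ideals and the fact that $[L,I]$ is an ideal implicit, whereas you make them explicit; this is a presentational difference only.
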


\begin{proof} We shall prove the result by induction on $i$. If $i=1$, then the statement clearly holds. Suppose that the above inclusion holds for all $i< k$. Then,
\begin{align*}
[\gamma_k(L),I]&=[[\gamma_{k-1}(L),L],I]\subseteq [[I,\gamma_{k-1}(L)],L]+[[L,I],\gamma_{k-1}(L)]\\
&\subseteq [[I, {}_{k-1}L],L]+[[L,I],{}_{k-1}L]= [I,{}_{k}L],
\end{align*}
where the first inclusion holds by the Jacobi identity and the second one uses the induction hypothesis.
\end{proof}

Let $p$ be a prime number, let $c,d$ be positive integers, let $X$ be a set with $d$ elements and let $L_c(X)$ denote the free $\Z_p$-Lie algebra over $X$ of nilpotency class $c$. Let $\widetilde{L}_c(X)= L_c(X)\otimes_{\Z_p} \Q_p$ denote the free $\Q_p$-Lie algebra of nilpotency class $c$. We can assume that $L_c(X)\subseteq \tilde{L}_c(X)$. We define a $\Z_p$-Lie algebra,
\[
\widehat{L}_c(X):= L_c(X)+\frac{1}{p}\gamma_2(L_c(X))+ \frac{1}{p^2}\gamma_3(L_c(X))+ \dots +\frac{1}{p^{c-1}}\gamma_{c}(L_c(X))\subseteq \widetilde{L}_c(X).
\]

\begin{Lem}\label{Lem: Lpowerful}
Let $\widehat{L}_c(X)$ be as defined above. Then
\begin{enumerate}
\item[(a)] $\widehat{L}_c(X)$ is a finitely generated powerful $\Z_p$-Lie algebra of nilpotency class $c$.
\item[(b)] The rank of $\widehat{L}_c(X)$ is $(c,d)$-bounded.
\item[(c)] The index $|\widehat{L}_c(X):L_c(X)|$ is $(p,c,d)$-bounded.
\end{enumerate}
\end{Lem}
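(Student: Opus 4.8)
The plan is to exploit the homogeneous grading of the free Lie algebra. Write $V_j$ for the degree-$j$ homogeneous component of $L_c(X)$, so that $L_c(X)=\bigoplus_{j=1}^{c}V_j$ as $\Z_p$-modules and $\gamma_i(L_c(X))=\bigoplus_{j=i}^{c}V_j$. Substituting this into the definition and collecting, for each fixed $j$, the coefficients $\frac{1}{p^{i-1}}$ with $i\le j$ (the largest of which, $\frac{1}{p^{j-1}}$, absorbs the others since $\frac{1}{p^{i-1}}V_j\subseteq \frac{1}{p^{j-1}}V_j$), I would first record the clean description $\widehat{L}_c(X)=\bigoplus_{j=1}^{c}\frac{1}{p^{j-1}}V_j$. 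This identity is the engine behind all three parts.

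For part (a), to see that $\widehat{L}_c(X)$ is closed under the bracket and powerful, I would bracket two generating pieces: for $a\in\gamma_i(L_c(X))$ and $b\in\gamma_j(L_c(X))$ one has $[\frac{1}{p^{i-1}}a,\frac{1}{p^{j-1}}b]=\frac{1}{p^{i+j-2}}[a,b]$, while Lemma~\ref{inclusioncentralseries} gives $[a,b]\in\gamma_{i+j}(L_c(X))$. The key bookkeeping is that $\frac{1}{p^{i+j-2}}\gamma_{i+j}=p\cdot\frac{1}{p^{i+j-1}}\gamma_{i+j}\subseteq p\,\widehat{L}_c(X)\subseteq \widehat{L}_c(X)$, which simultaneously yields closure and the powerful condition $[\widehat{L}_c(X),\widehat{L}_c(X)]\subseteq p\,\widehat{L}_c(X)$ (when $i+j>c$ the bracket vanishes). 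Finite generation then follows since $\widehat{L}_c(X)$ is sandwiched as $L_c(X)\subseteq\widehat{L}_c(X)\subseteq\widetilde{L}_c(X)$ with $L_c(X)$ finitely generated of finite index (part (c)); and the nilpotency class is exactly $c$, being at most $c$ because $\widehat{L}_c(X)\subseteq\widetilde{L}_c(X)$ and at least $c$ because $L_c(X)\subseteq\widehat{L}_c(X)$.

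Parts (b) and (c) are then a dimension count. From the graded description, $\widehat{L}_c(X)$ is a free $\Z_p$-module of the same rank as $L_c(X)$, namely $r=\sum_{j=1}^{c}n_j$ with $n_j=\dim_{\Q_p}(V_j\otimes\Q_p)$ given by Witt's formula $n_j=\frac{1}{j}\sum_{e\mid j}\mu(e)d^{j/e}\le d^{j}$; since the underlying $\Z_p$-module of any sub-Lie-algebra of $\widehat{L}_c(X)$ is a submodule of a free module of rank $r$ and hence needs at most $r$ generators, $\rk(\widehat{L}_c(X))\le r$, which is $(c,d)$-bounded. For the index, multiplication by $p^{j-1}$ gives $\frac{1}{p^{j-1}}V_j/V_j\cong V_j/p^{j-1}V_j\cong(\Z/p^{j-1})^{n_j}$, whence $|\widehat{L}_c(X):L_c(X)|=\prod_{j=1}^{c}p^{(j-1)n_j}=p^{\sum_{j}(j-1)n_j}$, and the exponent $\sum_{j}(j-1)n_j\le (c-1)r$ is $(c,d)$-bounded, so the index is $(p,c,d)$-bounded.

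I expect the main obstacle to be the powerfulness verification in (a): one must check that the $p$-adic denominators introduced by the coefficients $\frac{1}{p^{i-1}}$ combine, under the bracket, to exactly $\frac{1}{p^{i+j-2}}$, leaving precisely one surplus power of $p$ relative to the slot $\frac{1}{p^{i+j-1}}\gamma_{i+j}$ of $\widehat{L}_c(X)$, and this surplus is what forces the bracket into $p\,\widehat{L}_c(X)$. The coefficients in the definition of $\widehat{L}_c(X)$ are engineered precisely so that this accounting balances, and getting the grading and valuation bookkeeping right is the one place where genuine care is needed.
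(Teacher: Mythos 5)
Your proof is correct and follows essentially the same route as the paper: the powerfulness computation $[\tfrac{1}{p^{i-1}}\gamma_i,\tfrac{1}{p^{j-1}}\gamma_j]=\tfrac{1}{p^{i+j-2}}[\gamma_i,\gamma_j]\subseteq p(\tfrac{1}{p^{i+j-1}}\gamma_{i+j})$ and the sandwich $L_c(X)\subseteq\widehat{L}_c(X)\subseteq\widetilde{L}_c(X)$ for the nilpotency class are exactly the paper's argument, and your bounds coincide with theirs ($\rk\leq d+\cdots+d^c$ and index at most $p^{(c-1)r}$). Your explicit graded decomposition $\widehat{L}_c(X)=\bigoplus_j\tfrac{1}{p^{j-1}}V_j$ is a mild refinement (it gives the index exactly, where the paper only uses $p^{c-1}\widehat{L}_c(X)\subseteq L_c(X)$ and torsion-freeness), but it does not change the substance of the proof.
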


\begin{proof} We start by showing that the nilpotency class of $\widehat{L}_c(X)$ is $c$. Since $\widehat{L}_c(X)$ is a $\Z_p$-Lie subalgebra of  $\widetilde{L}_c(X)$, the nilpotency class of $\widehat{L}_c(X)$ is at most $c$. Since  $\widehat{L}_c(X)$ contains $L_c(X)$ the nilpotency class of  $\widehat{L}_c(X)$ is at least $c$. This shows that the nilpotency class of  $\widehat{L}_c(X)$ is precisely $c$. 

Let us check that the $\Z_p$-Lie algebra $\widehat{L}_c(X)$ is powerful. By the linearity of the Lie bracket $[,]$ in $\widetilde{L}_c(X)$, it suffices to show that for every $i,j\geq 1$, the following equality holds:
$$
[\frac{1}{p^{i-1}}\gamma_i(L_c(X)), \frac{1}{p^{j-1}}\gamma_j(L_c(X))]\subset p\widehat{L}_c(X).
$$
Indeed, for every $i,j\geq 1$, we have that
\begin{align*}
[\frac{1}{p^{i-1}}\gamma_i(L_c(X)), \frac{1}{p^{j-1}}\gamma_j(L_c(X))]&=\frac{1}{p^{i+j-2}}[\gamma_i(L_c(X)), \gamma_j(L_c(X))]\\
&\subseteq p(\frac{1}{p^{i+j-1}}\gamma_{i+j}(L_c(X))) \subset p \widehat{L}_c(X).
\end{align*}
So, $\widehat{L}_c(X)$ is powerful.

Next, we show that the rank of $\widehat{L}_c(X)$ is bounded by a function depending on $c$ and $d$. Consider $\widetilde{L}_c(X)=L_c(X)\underset{\Z_p}\otimes \Q_p$ as a $\Q_p$-vector space and decompose it as
\[
\widetilde{L}_c(X)=\text{Span}(X)\oplus \text{Span}([X,X])\oplus \cdots \oplus \text{Span}([X, \overset{c}\dots, X]),
\]
where the dimension of $\text{Span}([X, \overset{i}\dots, X])$ as $\Q_p$-vector space is smaller than equal to $d^i$. Then, we have that the dimension of $\widetilde{L}_c(X)$ as a $\Q_p$-vector space is
\[
\dim_{\Q_p}(\widetilde{L}_c(X))\leq d+d^2+\dots +d^c=\frac{d^{c+1}-d}{d-1}.
\]
Notice that  $\widehat{L}_c(X)$ has no torsion and recall that $\widehat{L}_c(X)\otimes_{\Z_p} \Q_p=L_c(X)\otimes_{\Z_p} \Q_p=\widetilde{L}_c(X)$. Thus 
$$
\rk(\widehat{L}_c(X))=\dim_{\Q_p}(\widetilde{L}_c(X))\leq \frac{d^{c+1}-d}{d-1}
$$
proves item (b).

Finally, since $p^{c-1} \widehat{L}_c(X)\subseteq L_c(X)$, we have that
\[
|\widehat{L}_c(X):L_c(X)|\leq p^{(c-1)r},
\]
where $r=\rk(G)$.
\end{proof}
Let $I$ be an ideal of $L_c(X)$ such that $|L_c(X):I|<\infty$. This ideal is not necessarily an ideal of $\widehat{L}_c(X)$ and thus, we extend $I$ to an ideal $\widehat{I}$ of $\widehat{L}_c(X)$ as follows:

\[
\widehat{I}:= I+ \frac{1}{p}[I,L_c(X)]+ \frac{1}{p^2} [[I,L_c(X)],L_c(X)]+ \dots + \frac{1}{p^{c-1}}[I, _{(c-1)}L_c(X)].
\]

\begin{Lem}Let $\widehat{L}_c(X)$ and $\widehat{I}$ be as above. Then 
$$[\widehat{I}, \widehat{L}_c(X)]\subset p\widehat{I},$$
and, in particular, $\widehat{I}$ is an ideal in $\widehat{L}_c(X)$.
\end{Lem}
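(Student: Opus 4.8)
The plan is to exploit the bilinearity of the Lie bracket in $\widetilde{L}_c(X)$ in order to reduce the claimed inclusion $[\widehat{I},\widehat{L}_c(X)]\subseteq p\widehat{I}$ to a statement about the individual summands appearing in the definitions of $\widehat{I}$ and $\widehat{L}_c(X)$. Writing $L=L_c(X)$ and using the conventions $[I,{}_{0}L]=I$ and $\gamma_1(L)=L$, we have $\widehat{I}=\sum_{i=0}^{c-1}\frac{1}{p^{i}}[I,{}_{i}L]$ and $\widehat{L}_c(X)=\sum_{j=0}^{c-1}\frac{1}{p^{j}}\gamma_{j+1}(L)$. Hence it suffices to prove, for every pair $0\leq i,j\leq c-1$, that
$$
\left[\frac{1}{p^{i}}[I,{}_{i}L],\ \frac{1}{p^{j}}\gamma_{j+1}(L)\right]=\frac{1}{p^{i+j}}\big[[I,{}_{i}L],\ \gamma_{j+1}(L)\big]\subseteq p\widehat{I}.
$$

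The key step is to bound the bracket $[[I,{}_{i}L],\gamma_{j+1}(L)]$ by means of Lemma \ref{inclusioncentralseries}. First I would record that $[I,{}_{i}L]$ is itself an ideal of $L$; this follows by induction on $i$ from the Jacobi identity together with the fact that an ideal $J$ satisfies $[J,L]\subseteq J$. Applying Lemma \ref{inclusioncentralseries} to the ideal $[I,{}_{i}L]$ then yields
$$
\big[\gamma_{j+1}(L),\ [I,{}_{i}L]\big]\subseteq\big[[I,{}_{i}L],{}_{j+1}L\big]=[I,{}_{i+j+1}L],
$$
so that the summand above is contained in $\frac{1}{p^{i+j}}[I,{}_{i+j+1}L]$.

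It then remains to split into two cases according to the value of $i+j+1$. If $i+j+1\leq c-1$, then $\frac{1}{p^{i+j}}[I,{}_{i+j+1}L]=p\cdot\frac{1}{p^{i+j+1}}[I,{}_{i+j+1}L]$, and the rightmost factor is precisely the summand of index $i+j+1$ in $\widehat{I}$, whence the desired inclusion in $p\widehat{I}$. If instead $i+j+1\geq c$, then, since $I\subseteq L$, monotonicity of the bracket gives $[I,{}_{i+j+1}L]\subseteq[L,{}_{i+j+1}L]=\gamma_{i+j+2}(L)\subseteq\gamma_{c+1}(L)=0$, because $L=L_c(X)$ has nilpotency class $c$; the bracket vanishes and lies trivially in $p\widehat{I}$. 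Combining the two cases establishes $[\widehat{I},\widehat{L}_c(X)]\subseteq p\widehat{I}$, and since $p\widehat{I}\subseteq\widehat{I}$ this shows in particular that $\widehat{I}$ is an ideal of $\widehat{L}_c(X)$.

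I do not expect a serious obstacle here: once Lemma \ref{inclusioncentralseries} is available, the argument is essentially bookkeeping of the powers of $p$ attached to each graded piece. The only points requiring genuine care are the observation that $[I,{}_{i}L]$ is an ideal (needed to invoke the lemma) and the boundary case $i+j+1\geq c$, where it is the nilpotency-class hypothesis on $L_c(X)$ — rather than membership in the defining sum of $\widehat{I}$ — that forces the relevant term to vanish.
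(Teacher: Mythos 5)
Your proof is correct and takes essentially the same approach as the paper's: expand $[\widehat{I},\widehat{L}_c(X)]$ summand by summand using bilinearity, apply Lemma \ref{inclusioncentralseries} to the ideal $[I,{}_{i}L_c(X)]$ to land in $\frac{1}{p^{i+j}}[I,{}_{i+j+1}L_c(X)]$, and match powers of $p$. You are in fact slightly more careful than the paper, which leaves implicit both the check that $[I,{}_{i}L_c(X)]$ is an ideal of $L_c(X)$ (needed to invoke the lemma) and the boundary case $i+j+1\geq c$, where the term vanishes by nilpotency rather than appearing as a summand of $\widehat{I}$.
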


\begin{proof}
For all $i\geq 1$ and $j\geq 1$, we have that
\begin{align*}
[\frac{1}{p^i} [I,_{i}L_c(X)], \frac{1}{p^{j-1}}\gamma_j(L_c(X))]&=\frac{1}{p^{i+j-1}}[[I, {}_{i}L_c(X)],\gamma_j(L_c(X))]\\
&\subset \frac{1}{p^{i+j-1}}[[I, {}_{i}L_c(X)],{}_{j}L_c(X)]\\
&=\frac{1}{p^{i+j-1}}[I, {}_{i+j} L_c(X)]= p(\frac{1}{p^{i+j}}[I, {}_{i+j}L_c(X)])\\
&\subset p\widehat{I},
\end{align*}
where in the first inclusion we used Lemma \ref{inclusioncentralseries}. By the linearity property of the Lie bracket, we conclude that $[\widehat{I}, \widehat{L}_c(X)]\subset p\widehat{I}$.
\end{proof}

\begin{Prop}\label{prop: powerfulpcentralrank} The quotient $\widehat{L}_c(X)/\widehat{I}$ is a finite powerful $p$-central $\Z_p$-Lie algebra with the $\Omega$EP and with $(c,d)$-bounded rank.
\end{Prop}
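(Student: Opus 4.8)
The plan is to verify the five asserted properties in turn, the genuinely substantial one being the $\Omega$EP. \emph{Finiteness} and \emph{bounded rank} come essentially for free from Lemma \ref{Lem: Lpowerful}: since $I\subseteq \widehat{I}$, and $|\widehat{L}_c(X):L_c(X)|$ is $(p,c,d)$-bounded while $|L_c(X):I|<\infty$ by hypothesis, we get
\[
|\widehat{L}_c(X):\widehat{I}|\leq |\widehat{L}_c(X):I|=|\widehat{L}_c(X):L_c(X)|\cdot|L_c(X):I|<\infty,
\]
so the quotient is finite; and because the sectional rank of a quotient never exceeds that of the ambient algebra, $\rk(\widehat{L}_c(X)/\widehat{I})\leq \rk(\widehat{L}_c(X))$, which is $(c,d)$-bounded by Lemma \ref{Lem: Lpowerful}(b). \emph{Powerfulness} is equally formal: $\widehat{L}_c(X)$ is powerful by Lemma \ref{Lem: Lpowerful}(a), and since the inclusion $[\widehat{L}_c(X),\widehat{L}_c(X)]\subseteq p\widehat{L}_c(X)$ passes to the quotient, we obtain $[\widehat{L}_c(X)/\widehat{I},\widehat{L}_c(X)/\widehat{I}]\subseteq p(\widehat{L}_c(X)/\widehat{I})$.

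The heart of the argument is to exhibit a $p$-central cover witnessing the $\Omega$EP, and then deduce $p$-centrality as a consequence. The candidate cover is $A:=\widehat{L}_c(X)/p\widehat{I}$, which is a $\Z_p$-Lie algebra because $p\widehat{I}$ is an ideal (the previous lemma shows $\widehat{I}$ is an ideal, hence so is $p\widehat{I}$), and it is finite for the same index reason as above. Since $\widehat{L}_c(X)\subseteq \widetilde{L}_c(X)$ is torsion-free, an element $a\in \widehat{L}_c(X)$ satisfies $pa\in p\widehat{I}$ if and only if $a\in \widehat{I}$; hence $\Omega_1(A)=\widehat{I}/p\widehat{I}$, and the third isomorphism theorem gives
\[
A/\Omega_1(A)=\frac{\widehat{L}_c(X)/p\widehat{I}}{\widehat{I}/p\widehat{I}}=\widehat{L}_c(X)/\widehat{I}.
\]

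It then remains only to check that $A$ is $p$-central, i.e.\ that $[\Omega_1(A),A]=0$. This is precisely where the previously proved containment $[\widehat{I},\widehat{L}_c(X)]\subseteq p\widehat{I}$ is used: it yields
\[
[\Omega_1(A),A]=[\widehat{I}/p\widehat{I},\widehat{L}_c(X)/p\widehat{I}]=\bigl([\widehat{I},\widehat{L}_c(X)]+p\widehat{I}\bigr)/p\widehat{I}=0.
\]
Thus $A$ is a $p$-central $\Z_p$-Lie algebra with $\widehat{L}_c(X)/\widehat{I}=A/\Omega_1(A)$, so $\widehat{L}_c(X)/\widehat{I}$ has the $\Omega$EP; and since the $\Omega$EP implies $p$-centrality (as noted right after Definition \ref{defi: powerfulpcentralomega}), the quotient is $p$-central as well. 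The main obstacle is conceptual rather than computational: recognizing that the correct $p$-central cover is $\widehat{L}_c(X)/p\widehat{I}$, and that its $p$-centrality is \emph{exactly} the bracket estimate already established for $\widehat{I}$. In other words, the ``divided power'' correction terms built into the definition of $\widehat{I}$ are engineered so that no new computation is needed at this stage, and the whole proposition reduces to assembling the earlier lemmas.
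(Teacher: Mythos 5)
Your proposal is correct and follows essentially the same route as the paper: quotient of a powerful algebra is powerful, rank bounded by Lemma \ref{Lem: Lpowerful}(b), and the $\Omega$EP witnessed by the cover $\widehat{L}_c(X)/p\widehat{I}$ with $\Omega_1$ identified as $\widehat{I}/p\widehat{I}$ and $p$-centrality coming from $[\widehat{I},\widehat{L}_c(X)]\subseteq p\widehat{I}$. Your justification of $\Omega_1(\widehat{L}_c(X)/p\widehat{I})=\widehat{I}/p\widehat{I}$ via torsion-freeness is a welcome detail the paper leaves as ``straightforward.''
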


\begin{proof} We start by showing that the quotient $\widehat{L}_c(X)/\widehat{I}$ is powerful. From Lemma \ref{Lem: Lpowerful} (a), we have that the $\Z_p$-Lie algebra $\widehat{L}_c(X)$ is powerful and since this property is inherited to factor Lie algebras, our claim holds. Also, the rank of $\widehat{L}_c(X)/\widehat{I}$ is at most the rank of $\widehat{L}_c(X)$ which is $(c,d)$-bounded by Lemma \ref{Lem: Lpowerful} (b).

Now, consider $p\widehat{I} \subset \widehat{I}$ which is an ideal of $\widehat{L}_c(X)$. It is straightforward to see that 
$\Omega_1(\widehat{L}_c(X)/p\widehat{I})=\widehat{I}/p\widehat{I}$ and therefore, 
\begin{align*}
\frac{\widehat{L}_c(X)/p\widehat{I}}{\Omega_1(\widehat{L}_c(X)/p\widehat{I})}=\frac{\widehat{L}_c(X)/p\widehat{I}}{\widehat{I}/p\widehat{I}}\cong \widehat{L}_c(X)/\widehat{I}.
\end{align*}
Thus, to prove that $\widehat{L}_c(X)/\widehat{I}$ has $\Omega$EP it is enough to 
 show that $\widehat{L}_c(X)/p\widehat{I}$ is a $p$-central $\Z_p$-Lie algebra. Indeed, by the previous lemma $[\widehat{I}, \widehat{L}_c(X)]\subset p\widehat{I},$ which shows that
\begin{align*}
[\Omega_1(\widehat{L}_c(X)/p\widehat{I}), \widehat{L}_c(X)/p\widehat{I}]&=[\widehat{I}/p\widehat{I}, \widehat{L}_c(X)/p\widehat{I}]=0.
\end{align*} 
Hence, we conclude that $\widehat{L}_c(X)/\widehat{I}$ is a powerful $p$-central $\Z_p$-Lie algebra with the $\Omega$EP.
\end{proof}



Let $I$ be a Lie ideal of $L_c(X)$ and let $\varphi: L_c(X)/I \to \widehat{L}_c(X)/\widehat{I}$ be the natural map that sends an element $a+I$ to $a+\widehat{I}$. Then
\[
\ker \varphi\cong \frac{L_c(X)\cap\widehat{I}}{I} \; \; \text{and} \; \; \varphi\left(\frac{L_c(X)}{I}\right)\cong \frac{L_c(X)}{L_c(X)\cap\widehat{I}}\cong \frac{L_c(X)+\widehat{I}}{\widehat{I}}.
\]
Thus, for any ideal $I$ there is an extension of $\Z_p$-Lie algebras
\begin{equation}\label{extensionofLiealgebras}
0 \to\frac{L_c(X)\cap \widehat{I}}{I} \hookrightarrow \frac{L_c(X)}{I} \twoheadrightarrow \frac{L_c(X)+\widehat{I}}{\widehat{I}}\to 0,
\end{equation}
satisfying the properties of the following proposition.

\begin{Prop} \label{Prop:bounds}Let $d,c$ be positive integers, let $X$ be a set with $d$ elements and let $I$ be an ideal of $L_c(X)$. Under the above notation, the following properties hold:
\begin{itemize}
\item[(a)] The order $|\frac{L_c(X)\cap\widehat{I}}{I}|$ is $(p,c,d)$-bounded.
\item[(b)] The index $|\tfrac{\widehat{L}_c(X)}{\widehat{I}}:\tfrac{L_c(X)+\widehat{I}}{\widehat{I}}|$ is $(p,c,d)$-bounded.
\end{itemize}
\end{Prop}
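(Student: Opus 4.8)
```latex
\textbf{Proposal.} The plan is to prove both bounds by controlling the relevant $\Z_p$-modules through the single uniform bound $p^{c-1}\widehat{L}_c(X)\subseteq L_c(X)$ established in Lemma~\ref{Lem: Lpowerful}, together with the $(c,d)$-boundedness of the rank $r=\rk(\widehat{L}_c(X))$. The key observation is that everything in sight lives between a lattice and a bounded multiple of that lattice, so all the finite quotients that appear are annihilated by a $(p,c,d)$-bounded power of $p$ and have $(c,d)$-bounded rank, hence $(p,c,d)$-bounded order.

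For part~(a), I would first show that $\widehat{I}$ is not much larger than $I$ in a controlled sense. By the very definition of $\widehat{I}$ as $I+\tfrac{1}{p}[I,L_c(X)]+\dots+\tfrac{1}{p^{c-1}}[I,{}_{(c-1)}L_c(X)]$, we have $p^{c-1}\widehat{I}\subseteq I$: indeed each summand $\tfrac{1}{p^{i}}[I,{}_{i}L_c(X)]$ satisfies $p^{c-1}\cdot\tfrac{1}{p^{i}}[I,{}_{i}L_c(X)]=p^{c-1-i}[I,{}_{i}L_c(X)]\subseteq I$ since $i\leq c-1$ and $[I,{}_{i}L_c(X)]\subseteq I$ as $I$ is an ideal. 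Intersecting with $L_c(X)$ gives
\begin{equation*}
p^{c-1}\bigl(L_c(X)\cap\widehat{I}\bigr)\subseteq p^{c-1}\widehat{I}\subseteq I\subseteq L_c(X)\cap\widehat{I}.
\end{equation*}
Therefore the quotient $(L_c(X)\cap\widehat{I})/I$ is annihilated by $p^{c-1}$. Its rank is at most $\rk(\widehat{L}_c(X))$, which is $(c,d)$-bounded by Lemma~\ref{Lem: Lpowerful}(b), so its order is bounded by $p^{(c-1)r}$, a $(p,c,d)$-bounded quantity, proving~(a).

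For part~(b), I would factor the index through $\widehat{L}_c(X)$ itself. Since $L_c(X)\subseteq L_c(X)+\widehat{I}\subseteq\widehat{L}_c(X)$, the index in question divides
\begin{equation*}
\Bigl|\tfrac{\widehat{L}_c(X)}{\widehat{I}}:\tfrac{L_c(X)+\widehat{I}}{\widehat{I}}\Bigr|=\bigl|\widehat{L}_c(X):L_c(X)+\widehat{I}\bigr|\leq\bigl|\widehat{L}_c(X):L_c(X)\bigr|,
\end{equation*}
and the right-hand index is $(p,c,d)$-bounded by Lemma~\ref{Lem: Lpowerful}(c). This immediately yields~(b); the point is that enlarging $L_c(X)$ to $L_c(X)+\widehat{I}$ only decreases the index, so no information about $I$ is needed at all for this half.

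The main obstacle I anticipate is purely bookkeeping rather than conceptual: one must be careful that $\widehat{I}$ genuinely sits inside $\widehat{L}_c(X)$ (guaranteed by the preceding lemma, which shows $[\widehat{I},\widehat{L}_c(X)]\subseteq p\widehat{I}$ and hence that $\widehat{I}$ is an ideal) and that the finiteness of $L_c(X)/I$ is used to ensure all the quotients are genuinely finite. The inclusion chain $p^{c-1}\widehat{I}\subseteq I$ is the crux; verifying it term-by-term from the definition of $\widehat{I}$ is the only computation, and it is routine once one notes that each denominator $p^i$ with $i\leq c-1$ is cleared by $p^{c-1}$ while the bracket $[I,{}_iL_c(X)]$ stays inside $I$.
```
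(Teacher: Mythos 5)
Your proposal is correct and follows essentially the same route as the paper: part (a) bounds the quotient by bounding its rank via $\rk(\widehat{L}_c(X))$ and its exponent via $p^{c-1}\widehat{I}\subseteq I$, and part (b) is the identical index comparison $|\widehat{L}_c(X):L_c(X)+\widehat{I}|\leq|\widehat{L}_c(X):L_c(X)|$ combined with Lemma~\ref{Lem: Lpowerful}(c). The only difference is that you spell out the term-by-term verification of $p^{c-1}\widehat{I}\subseteq I$, which the paper simply asserts.
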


\begin{proof}
We start by proving that the order $|\frac{L_c(X)\cap\widehat{I}}{I}|$ is $(p,c,d)$-bounded. To that aim, we shall show that both its rank and its exponent are $(p,c,d)$-bounded. Let $r$ denote the rank of $\widehat{L}_c(X)$ and note that $\rk(\frac{L_c(X)\cap\widehat{I}}{I})\leq r$. By Lemma \ref{Lem: Lpowerful} (b), $r$ is $(c,d)$-bounded and thus, $\rk(\frac{L_c(X)\cap\widehat{I}}{I})$ is also $(c,d)$-bounded. Moreover, as $L_c(X)\cap \widehat{I}\subseteq \widehat{I}$ and $p^{c-1}\widehat{I}\subseteq I$, the exponent of $\frac{L_c(X)\cap\widehat{I}}{I}$ is also $(p,c)$-bounded. Thus, the first claim holds.

The last claim follows from the fact that 
\[
\left|\frac{\widehat{L}_c(X)}{\widehat{I}}:\frac{L_c(X)+\widehat{I}}{\widehat{I}}\right|=|\widehat{L}_c(X):L_c(X)+\widehat{I}|\leq |\widehat{L}_c(X):L_c(X)|,
\]
and from Lemma \ref{Lem: Lpowerful} (c).
\end{proof}

\begin{Thm} \label{theorem:Liealgebras_structure}
Let $p$ be a prime number and let $L$ be a $d$-generated finite $\Z_p$-Lie algebra of nilpotency class $c$. Then there exist a powerful $p$-central $\Z_p$-Lie algebra $\widehat{L}$ with $(d,c)$-bounded number of generators and an ideal $J$ of $L$ such that
\begin{enumerate}
\item[(a)] $|J|$ is $(p,c,d)$-bounded.
\item[(b)] $L/J$ can be embedded as a subalgebra in $\widehat{L}/\Omega_1(\widehat{L})$.
\item[(c)] $|\widehat{L}/\Omega_1(\widehat{L}):L/J|$ is $(p,c,d)$-bounded.
\end{enumerate}
\end{Thm}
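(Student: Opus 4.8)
The plan is to realize the abstract Lie algebra $L$ as a quotient of a free nilpotent Lie algebra and then transport the structure results already established for the pair $(\widehat{L}_c(X), \widehat{I})$ to $L$ itself. Since $L$ is $d$-generated of nilpotency class $c$, the universal property of the free nilpotent $\Z_p$-Lie algebra gives a surjection $L_c(X) \twoheadrightarrow L$, where $X$ is a set with $d$ elements; let $I = \ker$ of this map, so $L \cong L_c(X)/I$. Because $L$ is finite, $I$ is an ideal of finite index in $L_c(X)$, which is exactly the setting of Proposition \ref{Prop:bounds} and the extension \eqref{extensionofLiealgebras}.

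I would then set $\widehat{L} := \widehat{L}_c(X)$ and let $J := (L_c(X) \cap \widehat{I})/I$, the kernel of the natural map $\varphi: L_c(X)/I \to \widehat{L}_c(X)/\widehat{I}$. By Proposition \ref{prop: powerfulpcentralrank}, $\widehat{L}_c(X)/\widehat{I}$ is a powerful $p$-central $\Z_p$-Lie algebra with the $\Omega$EP and $(c,d)$-bounded rank; moreover by the definition of the $\Omega$EP and the isomorphism displayed in that proof, $\widehat{L}_c(X)/\widehat{I} \cong (\widehat{L}_c(X)/p\widehat{I})/\Omega_1(\widehat{L}_c(X)/p\widehat{I})$. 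So taking $\widehat{L}$ to be the $p$-central algebra $\widehat{L}_c(X)/p\widehat{I}$ (rather than $\widehat{L}_c(X)$ itself) realizes $\widehat{L}/\Omega_1(\widehat{L}) \cong \widehat{L}_c(X)/\widehat{I}$, and its number of generators is $(c,d)$-bounded since its rank is. The extension \eqref{extensionofLiealgebras} then directly identifies $J = \ker\varphi$ with $(L_c(X)\cap\widehat{I})/I$ and the image $\varphi(L/J)$ with the subalgebra $(L_c(X)+\widehat{I})/\widehat{I}$ of $\widehat{L}_c(X)/\widehat{I} = \widehat{L}/\Omega_1(\widehat{L})$, giving the embedding of part (b).

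The three numerical conclusions are then read off from the results already proved. Part (a), that $|J|$ is $(p,c,d)$-bounded, is precisely Proposition \ref{Prop:bounds}(a). Part (c), that $|\widehat{L}/\Omega_1(\widehat{L}) : L/J|$ is $(p,c,d)$-bounded, is precisely Proposition \ref{Prop:bounds}(b) once one observes that $L/J \cong \varphi(L/J) = (L_c(X)+\widehat{I})/\widehat{I}$ sits inside $\widehat{L}/\Omega_1(\widehat{L}) = \widehat{L}_c(X)/\widehat{I}$. The bound on the number of generators of $\widehat{L}$ follows from Lemma \ref{Lem: Lpowerful}(b) together with the fact that $\widehat{L} = \widehat{L}_c(X)/p\widehat{I}$ is a quotient of $\widehat{L}_c(X)$, so its rank (and hence its generator number) is at most that of $\widehat{L}_c(X)$, which is $(c,d)$-bounded.

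The only genuinely delicate point, and the one I expect to be the main obstacle, is the bookkeeping needed to present $\widehat{L}$ correctly. One must decide whether to take $\widehat{L}$ to be $\widehat{L}_c(X)$ or the $p$-central cover $\widehat{L}_c(X)/p\widehat{I}$; the theorem asks for a \emph{powerful $p$-central} $\widehat{L}$ with $\widehat{L}/\Omega_1(\widehat{L})$ containing $L/J$, and it is the cover $\widehat{L}_c(X)/p\widehat{I}$ whose quotient by $\Omega_1$ is the object supplied by Proposition \ref{prop: powerfulpcentralrank}. Verifying that this cover is itself powerful and $p$-central (not merely that its $\Omega_1$-quotient is) and that its generator number remains $(c,d)$-bounded requires care, but each of these follows from properties already established: powerfulness passes to the quotient $\widehat{L}_c(X)/p\widehat{I}$ of the powerful algebra $\widehat{L}_c(X)$, $p$-centrality is exactly the computation $[\Omega_1(\widehat{L}_c(X)/p\widehat{I}), \widehat{L}_c(X)/p\widehat{I}] = [\widehat{I}/p\widehat{I}, \widehat{L}_c(X)/p\widehat{I}] = 0$ carried out in the proof of Proposition \ref{prop: powerfulpcentralrank}, and the rank bound is inherited from Lemma \ref{Lem: Lpowerful}(b). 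Thus the whole argument is essentially an assembly of the preceding lemmas, with the extension \eqref{extensionofLiealgebras} as the organizing device.
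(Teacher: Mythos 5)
Your proposal is correct and follows essentially the same route as the paper: present $L$ as $L_c(X)/I$, take $J=(L_c(X)\cap\widehat{I})/I$, and read off (a), (b), (c) from Proposition \ref{Prop:bounds} and Proposition \ref{prop: powerfulpcentralrank} via the extension \eqref{extensionofLiealgebras}. Your explicit identification of $\widehat{L}$ as the $p$-central cover $\widehat{L}_c(X)/p\widehat{I}$ (and the check that it is powerful with $(c,d)$-bounded rank) is in fact slightly more careful than the paper, which only asserts the existence of some $p$-central $\widehat{L}$ with $\widehat{L}/\Omega_1(\widehat{L})\cong\widehat{L}_c(X)/\widehat{I}$.
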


\begin{proof}Let $X$ denote a generating a set of the $d$-generated finite $\Z_p$-Lie algebra $L$ of nilpotency class $c$. Let $L_c(X)$ denote the free $\Z_p$-Lie algebra of nilpotency class $c$ and let $\pi: L_c(X) \twoheadrightarrow L$ be the projection map. Consider $I=\ker \pi$ so that $L\cong \frac{L_c(X)}{I}$. Then, $L$ fits into an extension of the form \eqref{extensionofLiealgebras}.

By abusing the notation, take
\[
J=\frac{L_c(X)\cap \widehat{I}}{I}\subseteq L_c(X)/I\cong L,
\]
of $(p,c,d)$-bounded order (see Proposition \ref{Prop:bounds} (a)). Now, 
\[
L/J\cong \frac{L_c(X)}{L_c(X)\cap \widehat{I}}\cong \frac{L_c(X)+\widehat{I}}{\widehat{I}}
\]
can be embedded in $\widehat{L}_c(X)/\widehat{I}$ where $|\widehat{L}_c(X)/\widehat{I}: L/J|$ is $(p,c,d)$-bounded.

Finally, by Proposition \ref{prop: powerfulpcentralrank} , $\widehat{L}_c(X)/\widehat{I}$ has the $\Omega$EP, that is, $\widehat{L}_c(X)/\widehat{I}\cong \widehat{L}/\Omega_1(\widehat{L})$ for some $p$-central $\Z_p$-Lie algebra. Thus, the result holds.
\end{proof}

Now we can translate this result to $p$-groups of small nilpotency class by the Lazard correspondence.

\begin{Thm} \label{theorem:Groups_structure} Let $p$ be a prime number. Let $G$ be a $d$-generated finite $p$-group of nilpotency class $c<p$. 
Then there exist a powerful $p$-central $p$-group $\widehat{G}$ with $(d,c)$-bounded number of generators and a  normal subgroup  $N$ of $G$ such that
\begin{enumerate}
\item[(a)] $|N|$ is $(p,c,d)$-bounded.
\item[(b)] $G/N$ can be embedded as a subgroup of $\widehat{G}/\Omega_1(\widehat{G})$.
\item[(c)] $|\widehat{G}/\Omega_1(\widehat{G}):G/N|$ is $(p,c,d)$-bounded.
\end{enumerate}
\end{Thm}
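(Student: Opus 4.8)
The plan is to transport Theorem \ref{theorem:Liealgebras_structure} across the Lazard correspondence, since $G$ has nilpotency class $c<p$ and is a finite $p$-group (hence a finitely generated nilpotent pro-$p$ group of class $<p$), so it lies in the category $Gr_p$. First I would set $L=\textbf{log}(G)$, which by Theorem \ref{thm: Lazard}(f) is a $d$-generated finite $\Z_p$-Lie algebra, and by Theorem \ref{thm: Lazard}(c) has nilpotency class $c$. Applying Theorem \ref{theorem:Liealgebras_structure} to $L$ produces a powerful $p$-central $\Z_p$-Lie algebra $\widehat{L}$ with $(d,c)$-bounded number of generators together with an ideal $J\trianglelefteq L$ satisfying the three bulleted properties at the level of Lie algebras.

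Next I would feed everything back through $\textbf{exp}$. Define $\widehat{G}=\textbf{exp}(\widehat{L})$ and $N=\textbf{exp}(J)$. By Theorem \ref{thm: Lazard}(a), since $J$ is an ideal of $L$, the image $N$ is a normal subgroup of $G$, and $\textbf{exp}(L/J)=G/N$. By Theorem \ref{thm: Lazard}(d),(e) the group $\widehat{G}$ is powerful $p$-central because $\widehat{L}$ is, and by Theorem \ref{thm: Lazard}(f) its number of generators equals that of $\widehat{L}$, hence is $(d,c)$-bounded. The $\Omega$EP statement is handled by Theorem \ref{thm: Lazard}(b): the presentation $\widehat{L}=A/\Omega_1(A)$ witnessing the $\Omega$EP for $\widehat{L}$ (equivalently $\widehat{L}_c(X)/\widehat{I}\cong \widehat{L}/\Omega_1(\widehat{L})$ from Proposition \ref{prop: powerfulpcentralrank}) transports to $\widehat{G}/\Omega_1(\widehat{G})=\textbf{exp}(\widehat{L}/\Omega_1(\widehat{L}))$, using $\textbf{exp}(\Omega_1(\widehat{L}))=\Omega_1(\textbf{exp}(\widehat{L}))$.

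It then remains to check that the three cardinality and index statements survive the correspondence. Property (a) follows because $\textbf{log}$ and $\textbf{exp}$ are mutually inverse isomorphisms of categories which, via the forgetful functors, preserve the underlying sets (Theorem \ref{thm: Lazard}, the displayed relation $\textbf{for}\circ\textbf{log}=\textbf{For}$), so $|N|=|J|$ is $(p,c,d)$-bounded. Property (b): since $L/J$ embeds as a subalgebra of $\widehat{L}/\Omega_1(\widehat{L})$, applying the functor $\textbf{exp}$ and using that $\textbf{exp}$ sends subalgebras to subgroups (again Theorem \ref{thm: Lazard}(f) or the faithfulness of the correspondence) yields an embedding $G/N\hookrightarrow \widehat{G}/\Omega_1(\widehat{G})$. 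Property (c) follows from the set-level bijection, which makes the index $|\widehat{G}/\Omega_1(\widehat{G}):G/N|$ equal to $|\widehat{L}/\Omega_1(\widehat{L}):L/J|$, already known to be $(p,c,d)$-bounded.

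The only genuine point requiring care, rather than routine translation, is verifying that the Lazard correspondence is compatible with passage to subobjects and with cardinalities in exactly the way the three properties demand: Theorem \ref{thm: Lazard} is stated for subgroups, ideals, quotients and generators, so I would make sure that the embedding in (b) and the index in (c) are legitimately covered, invoking that $\textbf{exp}$ and $\textbf{log}$ are inverse isomorphisms of categories and hence preserve inclusions, indices and orders of the underlying finite sets. I do not expect a real obstacle here, only the need to state the transport cleanly; the substantive content was already carried out on the Lie algebra side in Theorem \ref{theorem:Liealgebras_structure}.
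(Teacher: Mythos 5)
Your proposal is correct and takes exactly the paper's route: the paper's own proof of this theorem is the single sentence ``The claim follows from Theorem \ref{theorem:Liealgebras_structure} and Theorem \ref{thm: Lazard}'', and what you have written is precisely that transport argument spelled out in detail (including the minor checks, such as $\widehat{L}$ having class at most $c<p$ so that $\textbf{exp}$ applies, that the paper leaves implicit).
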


\begin{proof}
The claim follows from Theorem \ref{theorem:Liealgebras_structure} and Theorem \ref{thm: Lazard}
\end{proof}

\section{Counting arguments}

In this section, we shall prove some technical results that will be necessary to prove the main result. Actually, the following results can be considered as a generalization of Lemma 4.3 and Theorem 4.4 in \cite{DGG}.

\begin{Lem}\label{lemma:CpGQ}
Let $p$ be an odd prime, let $r,c$ be integral numbers and let $G$ be a $p$-group with $\rk(G)\leq r$. Let
\begin{equation}
 \xymatrix{
1 \ar[r] &C_p \ar[r] &G\ar[r]^{\pi} &Q \ar[r] &1,}
\end{equation}
be an extension of groups. Suppose that $Q$ has a subgroup $A$ of nilpotency class $c<p$. Set $B=\pi^{-1} (A)^{p^2}$. Then $B$ is a powerful $p$-central $p$-group of nilpotency class at most $c$ with the $\Omega$EP and $|G:B|\leq p^{2cr}|Q:A|$.
\end{Lem}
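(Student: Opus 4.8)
The plan is to work with the group $B = \pi^{-1}(A)^{p^2}$ and establish each claimed property in turn, using the structure result Proposition \ref{prop:GpOmega} together with elementary index estimates. Let me write $P = \pi^{-1}(A)$ for the preimage, so that $B = P^{p^2}$. Since $C_p \subseteq P$ and $P/C_p \cong A$ has nilpotency class $c < p$, the group $P$ has nilpotency class at most $c+1 \le p$. The first subtlety is that $c+1$ might equal $p$, so $P$ itself need not have class $<p$; this is precisely why one passes to a power subgroup rather than working with $P$ directly.

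First I would control the nilpotency class. Since $C_p$ is central of order $p$ (it injects into the extension and, being normal of order $p$ in a $p$-group, lies in the center of $G$, hence of $P$), the lower central series of $P$ satisfies $\gamma_{c+1}(P) \subseteq C_p$. One checks that $\gamma_{c+1}(P^{p^2})$ is killed: passing to the power subgroup $P^{p^2}$ lowers the class back below $p$. More carefully, I would argue that $B = P^{p^2}$ has nilpotency class at most $c$, so that the Lazard correspondence and Proposition \ref{prop:GpOmega} apply to $B$.

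Next I would invoke Proposition \ref{prop:GpOmega}: a finite $p$-group of nilpotency class smaller than $p$ has the property that its $p$-th power subgroup is powerful $p$-central with the $\Omega$EP. Here I expect to apply this to $P^{p}$ (or iterate): since $B = P^{p^2} = (P^{p})^{p}$, and $P$ — or a suitable quotient controlling the class — has class $< p$ after one power operation, a single or double application of Proposition \ref{prop:GpOmega} yields that $B$ is powerful $p$-central with the $\Omega$EP. The reason for taking the \emph{second} power $p^2$ rather than $p^1$ is to absorb the extra $+1$ in the class coming from the central extension by $C_p$, so that the hypotheses of Proposition \ref{prop:GpOmega} are genuinely met.

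Finally I would bound the index $|G:B|$. I would factor it as $|G:B| = |G:P|\cdot|P:P^{p^2}|$. The first factor is $|G:\pi^{-1}(A)| = |Q:A|$ since $\pi$ is surjective with kernel $C_p \subseteq \pi^{-1}(A)$. For the second factor, $P$ has $\rk(P) \le \rk(G) \le r$, so $P$ is generated by at most $r$ elements, whence $P/P^{p^2}$ is a quotient of a group of rank $r$ and exponent $p^2$; this gives $|P:P^{p^2}| \le p^{2r}$. The crude bound in the statement, $p^{2cr}$, leaves room to spare, so any reasonable estimate of $|P:P^{p^2}|$ by $p^{2r}$ suffices, giving $|G:B| \le p^{2r}|Q:A| \le p^{2cr}|Q:A|$. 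I expect the main obstacle to be the class-reduction step: verifying cleanly that the central $C_p$ raises the class by at most one and that taking $p^2$-th powers brings it back below $p$, so that Proposition \ref{prop:GpOmega} is legitimately applicable to $B$.
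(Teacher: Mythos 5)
Your plan follows the same route as the paper --- pass to $C=\pi^{-1}(A)$, observe that the central $C_p$ raises the class to at most $c+1$ (possibly $=p$), reduce the class by taking power subgroups so that Proposition \ref{prop:GpOmega} applies, and split the index at $C$ --- but the one genuinely nontrivial step is precisely the one you leave as ``one checks'' and yourself flag as the main obstacle: that $C^p$ has nilpotency class $<p$. This is a real gap, not a routine verification, because taking $p$-th powers does \emph{not} lower the nilpotency class of a general finite $p$-group; some specific mechanism is needed to exploit the fact that the excess class sits inside the exponent-$p$ central subgroup. The paper's mechanism is that $C$ is a regular $p$-group, for which one has the power--commutator inclusion $[\gamma_{p-2}(C^p),C^p]\leq[\gamma_{p-2}(C^p),C]^p$ (Lemma 2.13 of \cite{Gustavo2000}); since $\gamma_{p-1}(C)$ is contained in the central image $N$ of $C_p$, the right-hand side lies in $N^p=1$, giving $\gamma_{p-1}(C^p)=1$ and hence class $<p$ for $D=C^p$, after which Proposition \ref{prop:GpOmega} applies to $D$ and yields the claim for $B=D^p$. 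Your intuition that ``the extra $+1$ is absorbed because $C_p$ has exponent $p$'' is correct, but without regularity (or an equivalent commutator-power tool) you cannot transfer the $p$-th power from group elements onto the commutator subgroup. Note also that the identity $P^{p^2}=(P^p)^p$, which you use silently to set up the ``double application'' of Proposition \ref{prop:GpOmega}, itself requires regularity: in a general $p$-group the subgroup generated by $p$-th powers of $P^p$ can be strictly larger than $P^{p^2}$.

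A secondary issue is the index bound. From ``$P/P^{p^2}$ has rank $\leq r$ and exponent dividing $p^2$'' you infer $|P:P^{p^2}|\leq p^{2r}$; that inference is valid for abelian groups but is not justified in general (already for exponent $p$ a group of sectional rank $r$ can have order exceeding $p^{r}$, e.g.\ extraspecial groups, so the naive ``rank times exponent'' count fails). The paper instead filters $C/C^p$ by the subgroups $C^p\gamma_i(C)$, each successive quotient being elementary abelian of rank at most $r$ and the chain having length governed by the class, which yields $|C:C^p|\leq p^{cr}$ and likewise $|C^p:C^{p^2}|\leq p^{cr}$, hence the stated bound $p^{2cr}|Q:A|$. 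This part of your argument is repairable by the same filtration idea, but as written it does not establish the inequality you claim.
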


\begin{proof}
Put $C=\pi^{-1} (A)$, $D=C^p$ and let $N$ be the image of $C_p$ in $G$. Since the nilpotency class of $A$ is $c$, we know that the nilpotency class of $C$ is at most $c+1$. Then, we may write
\begin{align*}
\gamma_{p-1}(D)=[\gamma_{p-2}(C^p),C^p]\leq [\gamma_{p-2}(C^p),C]^p\subseteq N^{p}=1,
\end{align*}
where in the inequality we used the fact that $C$ is a regular $p$-group and Lemma 2.13 in \cite{Gustavo2000}.
In particular, $D$ is a $p$-group of nilpotency class at most $c$ and by Proposition \ref{prop:GpOmega}, $B=D^p$ is a powerful $p$-central group with the $\Omega$EP.

We also have $|G:B|=|G:C||C:D||D:B|$ and $|G:C|=|Q:A|$, where $C/D$ and $D/B$ have exponent $p$. Moreover, $C$ has rank at most $r$ and nilpotency class at most $c+1$. We may write 
\[
|C:D|=|C: D\gamma_2(C)|| D\gamma_2(C): D\gamma_3(C)|\dots  |D\gamma_{c}(C):D|.
\]
Note that the quotients $D\gamma_i(C)/D\gamma_{i+1}(C)$ are elementary abelian and therefore 
$$|D\gamma_i(C):D\gamma_{i+1}(C)|\leq p^r.$$
In particular $|C:D|\leq p^{cr}$. A similar argument shows that $|D:B|\leq p^{cr}$. Then, the bound in the statement follows.
\end{proof}

We continue with the main result of this section which is a generalization of Theorem 3.3 in \cite{Carlson}.

\begin{Thm}\label{thm:cohringsfromquotient}
Let $p$ be an odd prime, let $c, r, n, f$ be positive integers and suppose that 
\begin{equation}
\label{eq:cohringsfromquotient}
1 \to H \to G \to Q \to 1
\end{equation}
is an extension of finite $p$-groups with $|H|\leq p^n$, $\rk(G)\leq r$ and $Q$ has a subgroup $A$ of nilpotency class $c<p$ with $|Q:A| \leq f$. Then the ring $H^*(G;\F_p)$ is determined up to a finite number of possibilities (depending on $p$, $n$, $r$ and $f$) by the ring $H^*(Q;\F_p)$.
\end{Thm}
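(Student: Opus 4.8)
The plan is to reduce the extension \eqref{eq:cohringsfromquotient} to the situation of Lemma \ref{lemma:CpGQ} by dévissage on $|H|$, using a single central $C_p$ at a time, and to control the cohomology at each step via a Lyndon-Hochschild-Serre spectral sequence argument of the type introduced by Carlson. Concretely, I would induct on $n$ with $|H| \leq p^n$. When $n=0$ the map $G \to Q$ is an isomorphism and there is nothing to prove. For the inductive step, pick a central subgroup $Z \cong C_p$ of $G$ contained in $H$ (which exists since $H \trianglelefteq G$ is a nontrivial normal $p$-subgroup, so $H \cap Z(G) \neq 1$), and factor the extension as $1 \to C_p \to G \to \bar{G} \to 1$ followed by $1 \to \bar H \to \bar G \to Q \to 1$ with $\bar G = G/Z$ and $|\bar H| \leq p^{n-1}$. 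By the inductive hypothesis, $H^*(\bar G;\F_p)$ is determined up to finitely many possibilities by $H^*(Q;\F_p)$; it then remains to show that $H^*(G;\F_p)$ is determined up to finitely many possibilities by $H^*(\bar G;\F_p)$, for the single central extension by $C_p$.

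For that single step I would invoke Lemma \ref{lemma:CpGQ} applied to $1 \to C_p \to G \to \bar G \to 1$: since $\bar G$ (playing the role of $Q$) has the subgroup $\bar A$ coming from $A$ of nilpotency class $c<p$, the lemma produces $B = \pi^{-1}(\bar A)^{p^2}$, a powerful $p$-central $p$-group with the $\Omega$EP satisfying $|G:B| \leq p^{2cr}\,|\bar G : \bar A|$, and $|\bar G:\bar A| = |Q:A| \leq f$ is controlled. Thus $B$ is a subgroup of $G$ whose index is bounded in terms of $p$, $c$, $r$, $f$, and whose cohomology is completely understood by Theorem \ref{Thm:Weigel}: it is an explicit tensor product of an exterior and a polynomial algebra on a bounded number of generators (the bound coming from $\rk(G) \leq r$). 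The strategy is then Carlson's: the restriction $H^*(G;\F_p) \to H^*(B;\F_p)$ together with the LHS spectral sequence of $1 \to C_p \to G \to \bar G \to 1$ lets one bound the ring structure of $H^*(G;\F_p)$. The spectral sequence has $E_2 = H^*(\bar G;\F_p)\otimes H^*(C_p;\F_p)$, and since $C_p$ is central the differentials and the multiplicative extensions are governed by finitely many parameters once one knows $H^*(\bar G;\F_p)$; the finitely many choices of $d_2$, of the higher differentials, and of the hidden extensions in passing from $E_\infty$ to the associated graded each contribute only finitely many possibilities, provided the \emph{sizes} of all the relevant cohomology groups are bounded.

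The key uniform bound making this work is that the dimensions of $H^i(G;\F_p)$ in each degree $i$ are bounded in terms of $p$, $n$, $r$, $f$ (independently of the particular $G$): this follows because $B$ has bounded index, its cohomology ring is of the explicit Weigel form with a bounded number of generators, and the LHS spectral sequence of $1\to H\to G\to Q\to 1$ (or the iterated $C_p$-extensions) has a bounded number of rows given $|H|\leq p^n$. With such degreewise bounds, there are only finitely many possible finite-dimensional graded pieces, finitely many possible differentials, and finitely many possible multiplicative extensions, so the isomorphism type of $H^*(G;\F_p)$ as a ring ranges over a finite set determined by $H^*(Q;\F_p)$.

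The main obstacle I anticipate is making the spectral-sequence control genuinely \emph{uniform}, i.e. producing a single degree bound beyond which $H^*(G;\F_p)$ is determined by low-degree data together with the ring structure of the bounded-index powerful subgroup $B$. Knowing $H^*(B;\F_p)$ explicitly (Theorem \ref{Thm:Weigel}) and knowing $|G:B|$ is bounded gives, via the Evens norm / restriction-corestriction and the fact that $H^*(B;\F_p)$ is finitely generated with bounded generator degrees, a bound on the degrees of generators and relations of $H^*(G;\F_p)$; but assembling these into a statement that the whole graded ring is pinned down by finitely many choices over $H^*(Q;\F_p)$ is the delicate point, and is precisely where Carlson's counting argument (Theorem 3.3 of \cite{Carlson}), here adapted using the $p$-central powerful structure in place of the abelian subgroup available when $p=2$, must be carried out carefully.
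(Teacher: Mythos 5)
Your overall strategy --- d\'evissage to a single central $C_p$-extension, Lemma \ref{lemma:CpGQ} to produce a powerful $p$-central subgroup $B$ of bounded index with known cohomology (Theorem \ref{Thm:Weigel}), and then Carlson's spectral-sequence counting --- is the same as the paper's, but there is a genuine gap in how you set up the d\'evissage. You peel a central $Z\cong C_p$ off the \emph{bottom} of $H$, so your single-step extension is $1\to Z\to G\to \bar G\to 1$ with $\bar G=G/Z$, and to apply Lemma \ref{lemma:CpGQ} you need $\bar G$ to contain a bounded-index subgroup of nilpotency class $<p$. You take $\bar A$ to be ``the subgroup coming from $A$'' with $|\bar G:\bar A|=|Q:A|$; this forces $\bar A$ to be the preimage of $A$ in $\bar G$, which contains $\bar H=H/Z$ and is only an extension of $A$ by $\bar H$. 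Its nilpotency class can be as large as $c+n-1$, which may well be $\geq p$, so the hypothesis of Lemma \ref{lemma:CpGQ} is not verified and the step as stated fails. The paper avoids this by peeling from the \emph{top}: choose $H'\le H$ normal in $G$ with $|H:H'|=p$, so the $C_p$-extension is $1\to H/H'\to G/H'\to Q\to 1$, whose quotient is $Q$ itself and genuinely contains $A$ of class $c$; Lemma \ref{lemma:CpGQ} then supplies a powerful $p$-central subgroup $B'$ of $G'=G/H'$ of nilpotency class at most $c<p$ and index at most $p^{2cr}|Q:A|$, which is exactly what is needed to feed $G'$ back into the inductive hypothesis as the new quotient (with $f$ replaced by the controlled bound $p^{2cr}f$). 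Your induction could be repaired by reversing the order in this way, or by strengthening the inductive statement so that it also outputs a class-$\leq c$ subgroup of bounded index at each stage, but as written the reduction does not go through.

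A secondary point: in the single $C_p$-step you say the differentials and multiplicative extensions are ``governed by finitely many parameters'' once the cohomology groups have bounded dimensions, and you correctly flag the uniformity as the delicate issue, but you leave it unresolved. The mechanism in the paper is concrete: since $B$ (or $H\times B$ when $H\not\le B$) is powerful $p$-central with the $\Omega$EP, Theorem \ref{Thm:Weigel} yields a class $\eta\in H^2(B;\F_p)$ whose restriction to the central $C_p$ is nonzero; by the Evens norm argument of \cite[Lemma 3.2]{Carlson} this forces the Lyndon--Hochschild--Serre spectral sequence to stop by page $2|G:B|+1$, a bound depending only on $p$, $c$, $r$ and $f$, and then \cite[Proposition 3.1]{Carlson} gives the finite count. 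Degreewise dimension bounds alone do not pin down the ring, since it is nonzero in infinitely many degrees; the bounded stopping page is the essential input.
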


\begin{proof}

We start with the base case $H\cong C_p$. By Lemma \ref{lemma:CpGQ}, there exist a powerful $p$-central subgroup $B$ of $G$ with $\Omega$EP and whose index is bounded in terms of $p$, $r$ and $f$. If $H$ is not contained in $B$ we consider $H\times B$ instead of $B$. In both situations, there exists an element $\eta\in H^2(B,\F_p)$ (resp. $\eta\in H^2(B\times H,\F_p)$) such that $\text{res}^B_H( \eta )$ is non-zero (resp. $\text{res}^{B\times H}_H( \eta )$) (see Theorem \ref{Thm:Weigel}). Then the spectral sequence arising from 
\[
1 \to H \to G \to Q \to 1
\]
stops at most at the page $2|G:B|+1$ (cf. \cite[proof of Lemma 3.2]{Carlson}). Now the theorem holds by \cite[Proposition 3.1]{Carlson}.

For general $H$, we proceed by induction on $|H|$. Suppose that the result holds for all the group extensions of the form
\[
1\to H'\to G\to Q,
\] 
where $|H'|<|H|\leq p^n$, $\rk(G)\leq r$ and with $A\leq Q$ of nilpotency class $c<p$ and $|Q:A|\leq f$. Choose a subgroup $H'\leq H$ with $H' \unlhd G$ and $|H:H'|=p$. The quotients $G'=G/H'$ and $C_p\cong H/H'$ fit in a short exact sequence
\begin{equation}\label{eq:applyinginductionextensiongroups}
1 \to C_p \to G' \overset{\pi}\to Q \to 1
\end{equation}
and we also have the following extension of groups,
\begin{equation}\label{eq:centralextensioninduction}
1\to H'\to G\to G'\to 1.
\end{equation}
Applying Lemma \ref{lemma:CpGQ} to the extension of groups \eqref{eq:applyinginductionextensiongroups}, we know that $G'$ has a powerful $p$-central $p$-subgroup $B'$ with the $\Omega$EP with $|G':B'|\leq p^{2cr}|Q:A|$. Also, by the previous case, we have that the cohomology algebra $H^*(G';\F_p)$ is determined up to a finite number of possibilities (depending on $p$, $n$, $r$ and $f$) by the algebra $H^*(Q;\F_p)$.

Now, we may apply the induction hypothesis to the extension \eqref{eq:centralextensioninduction} since $|H'|<|H|$. Then, the cohomology algebra $H^*(G;\F_p)$ is determined up to a finite number of possibilities (depending on $p$, $n$, $r$ and $f$) by the algebra $H^*(G';\F_p)$. In turn, the result holds.
\end{proof}

\section{Main result and further work}

In this section we prove the main result of this paper and we mention the main obstructions to extend this result.

\begin{Thm}
Let $p$ be a prime number and let $d$ be a non-negative integer. Then the number of possible isomorphism types for the mod $p$-cohomology algebra of a $d$-generated $p$-group of nilpotency class smaller than $p$ is bounded by a function depending on $p$ and $d$.
\end{Thm}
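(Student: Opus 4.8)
The plan is to peel off a bounded normal subgroup and reduce the whole computation to a single powerful $p$-central group to which Weigel's theorem applies, combining Theorem \ref{theorem:Groups_structure}, Theorem \ref{Thm:Weigel} and the counting result Theorem \ref{thm:cohringsfromquotient}. First I would normalise the bounds. If $p=2$ the hypothesis forces $G$ abelian, and a $d$-generated abelian $p$-group has at most $d$ cyclic factors, so $H^*(G;\F_2)$ realises one of finitely many $d$-bounded forms; assume henceforth $p$ odd. Since the class $c$ satisfies $c<p$ it ranges over the finite set $\{1,\dots,p-1\}$, so it suffices to bound the count for each fixed $c$, and every $(p,c,d)$-bounded quantity below is automatically $(p,d)$-bounded. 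Applying Theorem \ref{theorem:Groups_structure} now yields a powerful $p$-central $p$-group $P:=\widehat{G}/\Omega_1(\widehat{G})$ with the $\Omega$EP and $(c,d)$-bounded number of generators, together with a normal subgroup $N\unlhd G$ with $|N|\le p^n$ ($n$ being $(p,d)$-bounded) such that $M:=G/N$ embeds in $P$ with $(p,d)$-bounded index $t=|P:M|$.

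I would next bound the rank. As $P$ is powerful, its sectional rank equals its minimal number of generators, so $\rk(P)$ is $(p,d)$-bounded; hence $\rk(M)\le\rk(P)$ and $\rk(N)\le\log_p|N|$ are bounded, and from $d(H)\le d(H\cap N)+d(HN/N)$ for every $H\le G$ one gets that $\rk(G)\le\rk(N)+\rk(M)$ is $(p,d)$-bounded; call this bound $r$. By Theorem \ref{Thm:Weigel}, $H^*(P;\F_p)\cong\Lambda(y_1,\dots,y_s)\otimes\F_p[x_1,\dots,x_s]$ with $s=\dim_{\F_p}\Omega_1(P)\le\rk(P)$, so $H^*(P;\F_p)$ takes only finitely many ($(p,d)$-bounded) isomorphism types. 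Granting the key point that the bounded-index subgroup $M$ inherits this finiteness, i.e. that $H^*(M;\F_p)$ also realises boundedly many isomorphism types, the assembly is immediate: I apply Theorem \ref{thm:cohringsfromquotient} to the extension $1\to N\to G\to M\to 1$, where $|N|\le p^n$, $\rk(G)\le r$, and $Q=M$ contains the subgroup $A=M$ of class $c<p$ and index $1$. This shows that $H^*(G;\F_p)$ is determined up to a $(p,n,r,1)$-bounded number of possibilities by $H^*(M;\F_p)$, and summing over the finitely many values of $c$ yields the theorem.

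The hard part is the point I granted: transferring boundedly many cohomology types from $P$ to its bounded-index subgroup $M$. One checks easily that $M$ is again $p$-central, since $\Omega_1(M)=M\cap\Omega_1(P)\le Z(P)\cap M\le Z(M)$ has order at most $p^s$; but $M$ is in general neither powerful nor endowed with the $\Omega$EP, so Theorem \ref{Thm:Weigel} does not apply to it directly, and—worse—the natural extension relating $M$ to an understood group has a \emph{large} kernel. My approach would be to set $B:=P^{p^j}$ for the least $j$ with $P^{p^j}\le\mathrm{core}_P(M)$ (so that $j$ and $|P:B|$, hence $|M:B|$, are $(p,d)$-bounded); iterating Proposition \ref{prop:GpOmega} shows that $B$ is powerful $p$-central with the $\Omega$EP, whence $H^*(B;\F_p)$ is understood, while $B\unlhd M$ with $M/B$ of bounded order. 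I would then feed the extension $1\to B\to M\to M/B\to 1$ into Carlson's spectral-sequence counting (\cite[Proposition 3.1]{Carlson}), after verifying that the degree-$2$ polynomial classes of $H^*(B;\F_p)$ are permanent cycles so that the Lyndon--Hochschild--Serre spectral sequence collapses at a $(p,d)$-bounded page, exactly the mechanism underlying Lemma \ref{lemma:CpGQ} and the proof of Theorem \ref{thm:cohringsfromquotient}.

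This large-kernel collapse is the step I expect to be the main obstacle. Theorem \ref{thm:cohringsfromquotient} as stated handles only small kernels (its induction runs over $|H|$ with $H$ a subquotient of the bounded part), whereas here the understood cohomology sits in the large kernel $B$ and must be propagated \emph{upward} across the bounded quotient $M/B$. Making this precise requires controlling the whole page $E_2=H^*(M/B;H^*(B;\F_p))$ together with the extension class, and arguing that only finitely many abutments $H^*(M;\F_p)$ are compatible with the $(p,d)$-bounded input data; this is precisely the refinement of Carlson's counting arguments that the computation demands, and where I would concentrate the technical effort.
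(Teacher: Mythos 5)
Your skeleton coincides with the paper's: bound the rank, invoke Theorem \ref{theorem:Groups_structure} to produce a normal subgroup $N$ of bounded order with $G/N$ embedded with bounded index in a powerful $p$-central group $\tilde{G}$ with the $\Omega$EP, describe $H^*(\tilde{G};\F_p)$ via Theorem \ref{Thm:Weigel}, and finish by applying Theorem \ref{thm:cohringsfromquotient} to $1\to N\to G\to G/N\to 1$ with $A=G/N$. Your preliminary reductions are all sound: the separate treatment of $p=2$, summing over $c\in\{1,\dots,p-1\}$, and the rank bound $\rk(G)\le \rk(N)+\rk(G/N)$ (the paper instead bounds the rank directly from regularity via $|H:\Phi(H)|\le |H:H^p|=|\Omega_1(H)|\le |\Omega_1(G)|=|G:G^p|$, but both work).

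The genuine gap is the step you yourself flag as the main obstacle: passing from $H^*(\tilde{G};\F_p)$ to boundedly many possibilities for $H^*(G/N;\F_p)$, given only that $M=G/N$ sits inside $\tilde{G}$ with bounded index. Your analysis of this step is correct as far as it goes: $M$ need not be powerful or have the $\Omega$EP; $M$ does contain a powerful $p$-central subgroup $B$ with the $\Omega$EP and bounded index (your $P^{p^j}$, or more directly $(\mathrm{core}_P(M))^{p}$ via Proposition \ref{prop:GpOmega}, since every subgroup of $P$ still has class $<p$); and what remains is a ``large kernel'' finiteness statement for $1\to B\to M\to M/B\to 1$. But you leave that statement unproved, proposing to ``concentrate the technical effort'' on controlling $E_2=H^*(M/B;H^*(B;\F_p))$ and the collapse page. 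This is not a refinement you need to develop: it is precisely \cite[Theorem 3.5]{Carlson} --- a $p$-group of bounded rank with a subgroup of bounded index whose cohomology is $\Lambda(y_1,\dots,y_s)\otimes\F_p[x_1,\dots,x_s]$ with $s$ bounded has only boundedly many possible cohomology rings --- and that single citation is how the paper closes the step. So the architecture of your argument matches the paper's, but as written the proof is incomplete at its self-identified hardest point; the missing ingredient is an existing theorem of Carlson rather than new spectral sequence work.
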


\begin{proof}Let $G$ be a $d$-generated finite $p$-group of nilpotency class smaller than $p$. Notice that 
the elementary abelian quotients $\gamma_i(G)G^p/\gamma_{i+1}(G)G^p$ can be generated by at most $d^i$ commutators. Therefore,
\begin{eqnarray*}
|G:G^p| & = & |\gamma_1(G)G^p: \gamma_2(G)G^p||\gamma_2(G)G^p:\gamma_3(G)G^p|\dots |\gamma_{p-1}(G)G^p:G^p| \\
& \leq & p^{d+d^2+\ldots + d^{p-1}}.
\end{eqnarray*}
Since $G$ is a regular $p$-group (see \cite[Theorem 2.8 (i)]{Gustavo2000}), for any subgroup $H$ of $G$, we have that
$$|H:\Phi (H)|\leq |H:H^p|=|\Omega_1(H)|\leq |\Omega_1(G)|= |G:G^p|\leq p^{d+d^2+\ldots +d^{p-1}}.$$
In particular, $\rk(G)\leq d+d^2+\ldots + d^{p-1}$ (see \cite[Theorem 2.8 (i) and Theorem 2.10 (iv)]{Gustavo2000}).

By Theorem \ref{theorem:Groups_structure}, there exist a powerful $p$-central group $\widehat{G}$  and a normal subgroup $N$ of $G$ such that
\begin{enumerate}[(i)]\label{propertiesmainproof}
\item The number of generators of $\widehat{G}$ is $(d)$-bounded,
\item The order $|N|$ is $(p,d)$-bounded,
\item\label{eq:point3proof}
$G/N$ can be embedded as a subgroup of $\tilde{G}=\widehat{G}/\Omega_1(\widehat{G})$ whose index if $(p,d)$-bounded.
\end{enumerate}
Since $\tilde{G}$ is poweful $p$-central the number of generators of $\tilde{G}$ coincide with the number of generators of $\Omega_1(\tilde{G})$ (compare  
\cite[Theorem 6.5]{JaikinGonzalez} and the facts that $\tilde{G}$ is powerful and $\Omega_1(\tilde{G})$ is abelian).  Furthermore $\tilde{G}$ has the $\Omega$EP. Therefore by Theorem \ref{Thm:Weigel}, $H^*(\tilde{G};\F_p)$ is isomorphic to the graded $\F_p$-algebra
 $$\Lambda(y_1, \dots, y_e) \otimes \F_p[x_1, \dots, x_e],$$
where $e$ denotes the number of generators of $\tilde{G}$ and $|y_i|=1$ and $|x_i|=2$. Recall that the possibilities for $e$ are bounded in terms of $p$ and $d$ ($e$ is at most $d+d^2+\ldots + d^{p-1}$). By Theorem \cite[Theorem 3.5]{Carlson} and point \eqref{eq:point3proof} above, the number of isomorphism types for $H^*(G/N;\F_p)$ is bounded in terms of $p$ and $d$. 

Finally, since the rank of $G$ is $(p,d)$-bounded, the nilpotency class of $G/N$ is smaller than $p$ and $|N|$ is $(p,d)$-bounded, applying Theorem \ref{thm:cohringsfromquotient} to the extension
$$1\to N\to G\to G/N\to 1$$
and to the group $A=G/N$ we conclude that the number of isomorphism types of algebras for $H^*(G;\F_p)$ is bounded in terms of $p$ and $d$. This concludes the proof of the  main result of this paper.
\end{proof}


The main obstruction to extend this result to $p$-groups of arbitrary nilpotency class is the absence of a nice Lie theory for such groups. However, we think that an analogous result to that of Theorem \ref{theorem:Groups_structure} should hold for $p$-groups of  arbitrary nilpotency class $c$. Thus, we propose the following conjecture.

\begin{Conj}
Let $p$ be a prime number and let $c$ and $d$ be non-negative integers. Then the number of possible isomorphism types for the mod-$p$ cohomology ring of a $d$-generated $p$-group of nilpotency class $c$ is bounded by a function depending on $p$, $d$ and $c$.
\end{Conj}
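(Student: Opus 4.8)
The plan is to retain the two-part architecture of the $c<p$ proof --- a structural approximation of $G$ by a powerful $p$-central group with the $\Omega$EP, followed by Carlson-style counting --- and to locate and remove every appearance of the hypothesis $c<p$. The purely homological inputs impose no constraint on the nilpotency class: Weigel's computation (Theorem~\ref{Thm:Weigel}) and the counting results \cite[Proposition 3.1 and Theorem 3.5]{Carlson} hold for all powerful $p$-central $\Omega$EP groups with $p$ odd. The hypothesis $c<p$ enters in only two ways, both of which I would need to bypass: through the Lazard correspondence (Theorem~\ref{thm: Lazard}), on which the group-level structural statements Proposition~\ref{prop:GpOmega} and Theorem~\ref{theorem:Groups_structure} rest, and through regularity of $G$, invoked in Lemma~\ref{lemma:CpGQ} and in the final rank estimate.

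First I would remove regularity from the rank estimate, where it is in fact unnecessary. For a $d$-generated finite $p$-group $G$ of class $c$, each factor $\gamma_i(G)/\gamma_{i+1}(G)$ is abelian and generated by the basic commutators of weight $i$ in the $d$ generators, of which there are at most $d^i$; hence its rank is at most $d^i$. Since the sectional rank is subadditive along a normal series, $\rk(G)\leq \sum_{i=1}^{c}d^i$, a $(c,d)$-bounded quantity valid for every $c$. Crucially, the Lie-theoretic structure theorem~\ref{theorem:Liealgebras_structure} is \emph{already} stated and proved without any restriction on $c$: the construction of the powerful-ized free Lie algebra $\widehat{L}_c(X)$ and its ideals in Section~\ref{sec: liealgebras} never uses $c<p$. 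Thus the only genuine gap for arbitrary $c$ is the passage between groups and $\Z_p$-Lie algebras.

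Consequently the crux is to re-prove Theorem~\ref{theorem:Groups_structure} for arbitrary $c$, that is, to produce for each $d$-generated $G$ of class $c$ a powerful $p$-central $\Omega$EP group $\widehat{G}$ of $(p,c,d)$-bounded rank and a normal subgroup $N\trianglelefteq G$ of $(p,c,d)$-bounded order such that $G/N$ embeds with $(p,c,d)$-bounded index in $\widehat{G}/\Omega_1(\widehat{G})$, together with a regularity-free version of Lemma~\ref{lemma:CpGQ} asserting that every such $G$ has a powerful $p$-central $\Omega$EP subgroup of $(p,c,d)$-bounded index. To obtain these I would replace the Lazard correspondence by the theory of saturable pro-$p$ groups (in the sense of Gonz\'alez-S\'anchez and Klopsch), which extends the group/Lie-lattice dictionary beyond class $<p$: I would pass to a power-subgroup $G^{p^{k}}$ with $k=k(p,c)$ bounded, chosen so that $G^{p^k}$ is potent and hence saturable, apply the class-free Theorem~\ref{theorem:Liealgebras_structure} to its associated $\Z_p$-Lie lattice, and transport the resulting powerful $p$-central $\Omega$EP lattice back to a group. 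Because $\rk(G)$ is $(c,d)$-bounded, the index $|G:G^{p^{k}}|\leq p^{k\,\rk(G)}$ is $(p,c,d)$-bounded, so the reduction to $G^{p^k}$ costs only a bounded, hence harmless, amount of cohomological information, recovered afterwards through \cite[Theorem 3.5]{Carlson} and the counting of Theorem~\ref{thm:cohringsfromquotient}.

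\textbf{The main obstacle} is the very first step of this construction. The Baker--Campbell--Hausdorff machinery underlying every group/Lie correspondence converges only in class $<p$, so one must instead certify saturability of a \emph{suitable} power of $G$; and the entire argument collapses unless the exponent $k$ can be bounded in terms of $p$ and $c$ \emph{alone}, independently of $|G|$. Proving that $G^{p^{k(p,c)}}$ is potent, equivalently saturable, with $k$ so bounded --- if this holds at all --- is where I expect essentially all the difficulty to lie, together with the subsidiary verification that powerful-ization and transport preserve the $\Omega$-extension property. Finally, since Weigel's Theorem~\ref{Thm:Weigel} requires $p$ odd, the prime $p=2$ falls outside this scheme entirely and would demand a separate treatment modelled on the even-prime techniques of \cite{Carlson}.
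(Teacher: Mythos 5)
The statement you are trying to prove is presented in the paper as a \emph{conjecture}: the paper offers no proof of it, and explicitly names the obstruction as ``the absence of a nice Lie theory'' for $p$-groups of nilpotency class $\geq p$. Your proposal does correctly isolate where $c<p$ enters: the regularity-free bound $\rk(G)\leq d+d^2+\cdots +d^c$ via subadditivity of sectional rank along the lower central series is fine, and you are right that Theorem \ref{theorem:Liealgebras_structure} and all of Section \ref{sec: liealgebras} are already class-free, as are Weigel's Theorem \ref{Thm:Weigel} and Carlson's counting results. But the proposal is not a proof, for two reasons. First, its pivotal step --- that $G^{p^{k}}$ with $(p,c)$-bounded $k$ is ``potent and hence saturable,'' with an exp--log transport preserving powerful, $p$-central and the $\Omega$EP --- is exactly the open obstruction, and you concede as much (``if this holds at all''). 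Worse, it is false as stated: saturable pro-$p$ groups in the sense of Lazard and Gonz\'alez-S\'anchez--Klopsch are torsion-free, so no nontrivial \emph{finite} $p$-group, in particular no $G^{p^k}$, is saturable, and potency does not repair this. The only correspondence available for finite groups in the needed generality is the exp--log theory for powerful $p$-central groups (for $p\geq 5$) of \cite{GSPhD} and \cite{WeigelHS}, which the paper invokes in its closing paragraph only as \emph{evidence} for the stronger rank conjecture, not as a proof.

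Second, and independently of the Lie-theoretic gap, your counting step runs in the wrong direction. Even with complete structural control of $G^{p^k}$, you have no tool to recover $H^*(G;\F_p)$ from it. In the paper, \cite[Theorem 3.5]{Carlson} passes from a \emph{known overgroup} down to a bounded-index subgroup (this is how the embedding $G/N\hookrightarrow \widehat{G}/\Omega_1(\widehat{G})$ is exploited), while Theorem \ref{thm:cohringsfromquotient} passes from a \emph{quotient} by a normal subgroup of bounded order up to the group. Neither ascends from a bounded-index subgroup to the ambient group: in the extension $1\to G^{p^k}\to G\to G/G^{p^k}\to 1$ the kernel is the large factor, so Theorem \ref{thm:cohringsfromquotient} does not apply, and applying \cite[Theorem 3.5]{Carlson} to $G^{p^k}\leq G$ would require already knowing $H^*(G;\F_p)$. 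Ascending from subgroup data is precisely the counting step that is unavailable (its absence is essentially why Carlson's coclass conjecture for odd $p$ is still open), so your claim that the reduction to $G^{p^k}$ ``costs only a bounded, hence harmless, amount of cohomological information'' is unjustified. A plan faithful to the paper's architecture would instead have to reprove Theorem \ref{theorem:Groups_structure} for arbitrary $c$ --- a normal subgroup $N$ of bounded order together with an \emph{embedding} of $G/N$ into a known group with bounded index --- plus a rank-based, regularity-free version of Lemma \ref{lemma:CpGQ}; both reduce to the same unproven group--Lie transfer, and the statement remains open.
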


Finite $p$-groups of fixed nilpotency class or fixed coclass $c$ share one common property, namely, that both have rank bounded by some function depending on $p$ and $c$. The following conjecture encompasses  the above conjecture as well as Carlson's conjecture for the cohomology of finite $p$-groups of fixed coclass (see \cite[Section 6]{Carlson}).

\begin{Conj}
Let $p$ be prime number and let $r$ be a non-negative integer. Then the number of possible isomorphism types for the mod-$p$ cohomology ring of a  $p$-group of rank $r$  is bounded by a function depending on $p$ and $r$.
\end{Conj}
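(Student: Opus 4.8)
The plan is to reuse, essentially verbatim, the architecture of the proof of the Main Theorem in Section~5, isolating the single place where the hypothesis ``class smaller than $p$'' is actually used. That proof combines three inputs: Weigel's description of the cohomology of powerful $p$-central groups with the $\Omega$EP (Theorem~\ref{Thm:Weigel}), Carlson's counting arguments across extensions with a bounded kernel and across bounded-index subgroups (Theorem~\ref{thm:cohringsfromquotient} and \cite[Theorem~3.5]{Carlson}), and the structural statement Theorem~\ref{theorem:Groups_structure}. The first two inputs never mention the nilpotency class and remain available for $p$-groups of arbitrary class (Weigel's theorem does require $p$ odd, so I assume $p$ odd throughout; the prime $2$ is governed by the even-prime techniques of \cite{Carlson}). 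The class hypothesis enters \emph{only} through Theorem~\ref{theorem:Groups_structure}, whose proof runs through the Lazard correspondence (Theorem~\ref{thm: Lazard}). Thus the whole argument would go through for a $p$-group $G$ of rank $r$ once one establishes the following rank-analogue of Theorem~\ref{theorem:Groups_structure}, which I will call $(\star)$: every finite $p$-group $G$ of rank $r$ admits a normal subgroup $N\trianglelefteq G$ of $(p,r)$-bounded order together with a powerful $p$-central $p$-group $\widehat{G}$ with the $\Omega$EP and of $(p,r)$-bounded rank, such that $G/N$ embeds with $(p,r)$-bounded index into $\widehat{G}/\Omega_1(\widehat{G})$. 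Note that the controlling bound must now be in terms of the \emph{rank}: for a group of unbounded class the free construction $\widehat{L}_c(X)$ of Section~\ref{sec: liealgebras} has rank of order $d^{c}$ and is useless, whereas the invariant we control is $\rk(G)\le r$, and the overgroup $\widehat{G}$ must have rank comparable to $r$.

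To attack $(\star)$ I would first reduce to \emph{powerful} groups. By the theory of powerful $p$-groups (Lubotzky--Mann), a finite $p$-group of rank $r$ contains a characteristic powerful subgroup $P$ of $(p,r)$-bounded index; since Carlson's arguments show that a bounded-index subgroup determines the cohomology of the whole group up to finitely many possibilities (a companion, in the reverse direction, to \cite[Theorem~3.5]{Carlson}, using the spectral-sequence estimates underlying Theorem~\ref{thm:cohringsfromquotient}), it should suffice to prove $(\star)$ for powerful $G$. The point of passing to a powerful group is that a Lie correspondence becomes available \emph{without} the constraint on the class: powerful, and more precisely \emph{saturable}, pro-$p$ groups correspond via an $\exp/\log$ dictionary generalising Theorem~\ref{thm: Lazard} to $\Z_p$-Lie lattices of the same dimension, and that dimension is $(p,r)$-bounded. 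One would then build $\widehat{G}$ by transporting the construction of Section~\ref{sec: liealgebras} to this setting: realise $P$ as the exponential of a $\Z_p$-Lie lattice $\ell$ of rank $\le r$, pass to the powerful $p$-central lattice with the $\Omega$EP obtained from $\ell$ exactly as $pM/pI$ is produced from $M$ in Proposition~\ref{prop:GpOmega}, and let $\widehat{G}$ be its exponential. Feeding $(\star)$ into the pipeline of Section~5 would then finish the proof.

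The hard part --- and the reason this remains a conjecture --- is precisely the Lie-theoretic input just invoked. The $\exp/\log$ correspondence beyond class $<p$ exists only for saturable pro-$p$ groups (in the sense of Gonz\'alez-S\'anchez), and a finite powerful $p$-group of rank $r$ need not be saturable for odd $p$: controlling the $p$-power map and the subgroup $\Omega_1$ uniformly across unbounded nilpotency class is exactly the ``nice Lie theory'' that is missing, and the obstruction to the $\Omega$EP is a genuine $p$-torsion phenomenon that the class bound in Section~\ref{sec: liealgebras} was designed to circumvent. Moreover, this conjecture \emph{contains} Carlson's still-open coclass conjecture for odd $p$: maximal-class $p$-groups have coclass $1$, hence rank bounded in terms of $p$, but unbounded class, so a complete proof along these lines is not to be expected from the present tools. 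A realistic intermediate target is to establish $(\star)$ for the powerful groups that \emph{are} saturable --- for instance potent $p$-groups --- which would already recover the coclass case treated for the non-twisted groups in \cite{DGG} and would isolate the twisting phenomenon as the final obstacle.
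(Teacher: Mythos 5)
This statement is one of the paper's closing \emph{conjectures}; the paper contains no proof of it, and your text is accordingly a strategy rather than a proof: your pivotal statement $(\star)$ is left unestablished, so the argument does not close. That said, your diagnosis is accurate and matches the paper's own closing discussion: the class hypothesis enters only through Theorem \ref{theorem:Groups_structure} via the Lazard correspondence, and $(\star)$ is the right shape of target, because it feeds the only two counting tools actually available --- Theorem \ref{thm:cohringsfromquotient} (a quotient by a normal subgroup of \emph{bounded order} determines $H^*(G;\F_p)$ up to finitely many possibilities) and \cite[Theorem 3.5]{Carlson} (the cohomology of an \emph{overgroup} of bounded index determines that of the subgroup up to finitely many possibilities). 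Note, however, that the paper's final paragraph records a classical fact stronger than your reduction step: for $p\geq 5$, any $p$-group of rank $r$ already contains a powerful $p$-central subgroup with the $\Omega$EP of $(p,r)$-bounded index (citing \cite{khukhro}, \cite{GSPhD}, \cite{WeigelHS}), so the subgroup whose cohomology Theorem \ref{Thm:Weigel} computes is available with no saturability considerations at all.

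The genuine gap --- and the reason the classical fact above does not already settle the conjecture --- is your appeal to ``a companion, in the reverse direction, to \cite[Theorem 3.5]{Carlson},'' i.e., the claim that a bounded-index subgroup with known cohomology determines the cohomology of the ambient group up to finitely many possibilities. No such result exists in the paper or in \cite{Carlson}, and the spectral-sequence estimates underlying Theorem \ref{thm:cohringsfromquotient} do not yield it: they apply to extensions $1\to H\to G\to Q\to 1$ with $|H|$ bounded (bounded kernel, large quotient), whereas passing from a powerful subgroup $P\leq G$ of bounded index to $G$ produces an extension with unbounded kernel $P$ and bounded quotient $G/P$, where neither the stability argument (which needs a class $\eta\in H^2$ of the kernel restricting nontrivially to a central $C_p$) nor \cite[Proposition 3.1]{Carlson} applies. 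Indeed, if that reverse companion were available, the conjecture would follow immediately from the classical subgroup fact, bypassing $(\star)$ entirely --- which is exactly why the paper offers that fact only as ``an argument in favor'' and not as a proof. Your roadmap is otherwise consistent with the paper's stated obstruction (no Lazard-type correspondence beyond class $<p$; saturability fails for general powerful groups), and, as you correctly note, the conjecture contains Carlson's still-open coclass conjecture for odd $p$, so both load-bearing steps of your plan --- the reverse-direction counting lemma and $(\star)$ --- must be regarded as open.
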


One argument in favor of this conjecture is that if $G$ is a $p$-group of rank $r$, then it contains powerful $p$-central subgroup with $\Omega$EP whose index is bounded in terms of $p$ and $r$. The existence of a powerful $p$-central subgroup is classical (see, for example, \cite[\S 11]{khukhro}). The fact that this subgroup can be chosen to have the $\Omega$EP, at least for $p\geq 5$, can be deduced from the Ph.D. thesis \cite{GSPhD} of the last author or from the Habilitation Schrift of T. S. Weigel \cite{WeigelHS}.

\end{document}